\documentclass{amsart} 

\usepackage{amsmath,amssymb,amsthm,amsfonts,amsxtra,mathtools}
\usepackage{enumerate,verbatim,mathrsfs,comment,color,url}
\usepackage[all,2cell,ps]{xy}
\usepackage[pagebackref]{hyperref}
\usepackage{cleveref}

\DeclareMathOperator{\Ass}{Ass}

\DeclareMathOperator{\indeg}{indeg}

\DeclareMathOperator{\Spec}{Spec}

\renewcommand{\ge}{\geqslant}
\renewcommand{\le}{\leqslant}

\newcommand{\bn}{\mathbb{N}}
\newcommand{\bz}{\mathbb{Z}}

\newcommand{\fa}{\mathfrak{a}}
\newcommand{\fm}{\mathfrak{m}}

\newcommand{\fp}{\mathfrak{p}}
\newcommand{\fq}{\mathfrak{q}}

\newcommand{\mca}{\mathcal{A}}
\newcommand{\mcb}{\mathcal{B}}

\newcommand{\xlra}{\xlongrightarrow}

\theoremstyle{plain}
\newtheorem{theorem}{Theorem}[section]
\newtheorem{lemma}[theorem]{Lemma}
\newtheorem{proposition}[theorem]{Proposition}
\newtheorem{corollary}[theorem]{Corollary}

\theoremstyle{definition}
\newtheorem{definition}[theorem]{Definition}

\newtheorem{example}[theorem]{Example}

\newtheorem{notation}[theorem]{Notation}

\newtheorem{question}[theorem]{Question}
\newtheorem{setup}[theorem]{Setup}

\theoremstyle{remark}
\newtheorem{remark}[theorem]{Remark}

\numberwithin{equation}{theorem}

\title[Asymptotic prime divisors and Vasconcelos invariant]{Asymptotic prime divisors and Vasconcelos invariant}
 
\author[D.~Ghosh]{Dipankar Ghosh}
\address{Department of Mathematics, Indian Institute of Technology Kharagpur, West Bengal - 721302, India}
\email{dipankar@maths.iitkgp.ac.in, dipug23@gmail.com}
\urladdr{\url{https://orcid.org/0000-0002-3773-4003}}

\author[R.~Nanduri]{Ramakrishna Nanduri}
\address{Department of Mathematics, Indian Institute of Technology Kharagpur, West Bengal - 721302, India}
\email{nanduri@maths.iitkgp.ac.in}

\author[S.~Pramanik]{Siddhartha Pramanik}
\address{Department of Mathematics, Indian Institute of Technology Kharagpur, West Bengal - 721302, India}
\email{siddharthap@kgpian.iitkgp.ac.in, pramaniksiddhartha2@gmail.com}
\urladdr{\url{https://orcid.org/0009-0005-1219-4265}}

\date{\today}
\subjclass[2020]{Primary 13A15; Secondary 13E05, 13A05, 13A02}
\keywords{Asymptotic associated primes; Graded rings and modules; Vasconcelos invariant}

\begin{document}
    \pagenumbering{arabic}
    \thispagestyle{empty}

\begin{abstract}
Let $R$ be a Noetherian ring, $I$ an ideal of $R$, and $M$ a finitely generated $R$-module. In this article, we prove that
$$\displaystyle \mathrm{Ass}_R(M/I^n M) = \mathrm{Ass}_R(0:_M I) \cup  \mathrm{Ass}_R(I^{n-1} M/I^{n} M) \text{ for all }n \gg 0.$$
We then investigate the asymptotic behaviour of the (local) Vasconcelos invariant of $M/I^n M$ as a function of $n$, when $R$ is $\mathbb{N}$-graded, $I$ is homogeneous, and $M$ is $\mathbb{Z}$-graded. When $I$ is generated by elements of positive degree, we show that, for sufficiently large $n$, the (local) Vasconcelos invariant of $M/I^n M$ either coincides with that of the colon submodule $(0 :_M I)$, or is a polynomial in $n$ of degree one whose leading coefficient is one of the degrees of the generators of $I$. This dichotomy depends exclusively on two cases determined by $(0:_MI)$. Thus, we recover and considerably strengthen the main results of Fiorindo-Ghosh [Nagoya Math.~J. 258 (2025), 296–310.], where asymptotic linearity was shown under the additional assumption that $(0 :_M I)=0$.
\end{abstract}
\maketitle 

\section{Introduction}

In 1921, Emmy Noether \cite{N21} introduced the theory of primary decomposition, which profoundly shaped the development of Commutative Algebra and Algebraic Geometry. Many foundational results in these areas would not be possible without this theory, and it is in this sense that Noetherian rings acquire their central importance. From a modern perspective, however, the concept of associated primes has proven more fundamental than primary decomposition itself. Let $R$ be a (commutative) Noetherian ring, $I$ an ideal of $R$, and $M$ a finitely generated $R$-module. In the last few decades, the behaviour of various algebraic invariants of the module $M/I^n M$ for all large $n$ has been a widely studied subject. This was inspired by the influential result of Brodmann \cite{B79a}, which states that the set of associated primes $\Ass_R(M/I^nM) $ becomes independent of $n$ for all sufficiently large $n$. Along the way, he also proved that $\Ass_R(I^{n-1}M/I^{n}M)$ stabilizes for all $n \gg 0$. More generally, if $N$ is a submodule of $M$, then it follows from a result of Kingsbury-Sharp \cite[Thm.~1.5.(ii)]{KS96} that $\Ass_R(M/I^nN)$ become stable for all $n \gg 0$. We therefore fix the following notation for later use.

\begin{notation}\label{notation}
Let $N\subseteq M$ be finitely generated $R$-modules. Denote
   $$\mca^M_N(I) := \Ass_R \big(M/I^n N \big) \; \mbox{ and } \; \mcb_M(I) := \Ass_R \big(I^{n-1}M/I^{n} M \big) \text{ for all } n \gg 0.$$
 Set $ \mca_M(I) := \mca^M_M(I).$
\end{notation}

A natural question concerns the relationship between the stable sets $\mca_M(I)$ and $\mcb_M(I)$. In \cite[(7)]{B79a}, Brodmann showed that if $(0 :_M I) =0$, then $\mca_M(I) = \mcb_M(I)$, where $(0 :_M I) := \{ x \in M : Ix = 0\}$. In \cite[Cor.~13]{ME79}, McAdam-Eakin proved that the difference $\mca_R(I) \smallsetminus \mcb_R(I)$ is a subset of $\Ass_R(R)$. 
In this regard, we prove the following result, which generalizes the aforementioned results of Brodmann and McAdam-Eakin.

\begin{theorem}[See Theorem~\ref{th:ass} for stronger results]\label{th:intro1}
    Let $R$ be a Noetherian ring, $I$ an ideal of $R$, and $M$ a finitely generated $R$-module. Then, $$\Ass_R(M/I^n M) = \Ass_R(0:_M I) \cup  \Ass_R(I^{n-1} M/I^{n} M) \; \mbox{ for all } n \gg 0.$$
    In particular, $ \mca_M(I) = \Ass_R(0:_M I) \cup \mcb_M(I) $, cf.~\rm{Notation~\ref{notation}}.
\end{theorem}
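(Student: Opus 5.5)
We prove the two inclusions separately, for all $n \gg 0$, using Artin--Rees together with the standard short exact sequences
$$0 \to I^{n-1}M/I^nM \to M/I^nM \to M/I^{n-1}M \to 0.$$

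\emph{The inclusion $\supseteq$.} First, $I^{n-1}M/I^nM$ is a submodule of $M/I^nM$, so its associated primes lie in $\Ass_R(M/I^nM)$. Next, by Artin--Rees there is a $c$ with $(0:_M I)\cap I^nM = I^{n-c}\big((0:_MI)\cap I^cM\big)$ for $n\ge c$. Since $I$ kills $(0:_MI)$, this vanishes once $n>c$. Hence $(0:_MI)$ embeds into $M/I^nM$ for $n\gg0$, and so $\Ass_R(0:_MI)\subseteq \Ass_R(M/I^nM)$.

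\emph{The inclusion $\subseteq$.} Let $\fp\in\Ass_R(M/I^nM)$ with $n\gg0$, and localize at $\fp$; this preserves every module in sight, and $(0:_MI)_\fp=(0:_{M_\fp}I_\fp)$. Choose $x\in M$ whose image $\bar x\in M/I^nM$ has annihilator $\fp$. If $x\in I^{n-1}M$, then $\fp\in\Ass(I^{n-1}M/I^nM)$ and we are done. Otherwise, the key tool is the stronger Artin--Rees-type statement
$$(I^nM:_M I)=I^{n-1}M+(0:_MI)\quad\text{for } n\gg0,$$
a classical fact (Ratliff-type, or via the Rees module). To prove it, consider the graded submodule $\bigoplus_n (I^{n+1}M:_MI)t^n$ of $M[t]$ over the Rees algebra, or use the chain $(I^nM:_MI^k)$. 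Concretely, by Artin--Rees, $(I^{n}M:_MI)\cap I^{k}M = I^{n-1}M$ for suitable $k$ and large $n$, and $(I^nM:I)\subseteq (0:_MI)+I^kM$ by Krull-intersection-type arguments on $(\bigcap_n (I^nM:_M I))=(0:_MI)$. This last point needs care, since Krull intersection only works after localizing. I expect this colon identity to be the main obstacle.

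Granting it, we argue as follows. Since $\fp\supseteq I$ (else $(M/I^nM)_\fp=0$), some power $I^j$ of $I$ carries $\bar x$ into the socle-like part. Pick $y=ax$ with $a\in I^{j}$ such that $\fp=\operatorname{ann}(\bar y)$ and $I\bar y=0$; this is possible by taking $j$ maximal with $I^j\bar x\neq 0$, which is possible since $I\subseteq\fp$ and $\bar x$ is killed by a power of $\fp$ after localizing. Then $y\in (I^nM:_MI)=I^{n-1}M+(0:_MI)$, so we may write $y=u+z$ with $u \in I^{n-1}M$ and $z\in(0:_MI)$. Then $\fp$ is associated to the image of $I^{n-1}M+(0:_MI)$ in $M/I^nM$, which, using $(0:_MI)\cap I^nM=0$ and Artin--Rees again ($(0:_MI)\cap I^{n-1}M=0$), is isomorphic to $I^{n-1}M/I^nM\oplus(0:_MI)$. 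Therefore $\fp$ lies in $\Ass(I^{n-1}M/I^nM)\cup\Ass(0:_MI)$.

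The "in particular" statement then follows from Brodmann's stability of both sides.
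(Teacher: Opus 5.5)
Your proof is essentially the paper's. Like the paper, you rest on the identity $(I^nM:_MI)=(0:_MI)+I^{n-1}M$ for $n\gg0$, which the paper imports from \cite[Cor.~2.1]{D21}. You then note that any $x$ with $(I^nM:_Rx)=\fp\supseteq I$ lies in this colon, and split off $(0:_MI)$ by Artin--Rees. Your localization and choice of $j$ are superfluous: $I\subseteq\fp=\operatorname{ann}(\bar x)$ already forces $j=0$ and $y=x$.

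The one thing to fix is your sketched justification of the colon identity, which does not work. First, $\bigcap_n(I^nM:_MI)=(\bigcap_nI^nM:_MI)$ can strictly contain $(0:_MI)$; take $R=M=K\times K$ and $I=K\times 0$. Second, even where the intersection is $(0:_MI)$, passing to $(I^nM:_MI)\subseteq(0:_MI)+I^kM$ for large $n$ would need a Chevalley-type (completeness) argument.

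If you want the proof self-contained, apply Artin--Rees to $\phi(M)\subseteq M^{c}$, where $\phi(x)=(y_1x,\dots,y_cx)$ for generators $y_1,\dots,y_c$ of $I$. Since $\phi^{-1}(I^kM^{c})=(I^kM:_MI)$ and $\ker\phi=(0:_MI)$, this gives, for $n>h$,
\[
(I^nM:_MI)=I^{n-h}(I^hM:_MI)+(0:_MI)\subseteq I^{n-1}M+(0:_MI).
\]
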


Theorem~\ref{th:intro1} is a special case of the more general result proved in Theorem~\ref{th:ass}; see Remark~\ref{2.1-and-2.2}. In Theorem~\ref{th:ass}, in fact we establish that
$$\Ass_R(M/I^n N) \cap V(I) = \Ass_R(0:_M I) \cup  \Ass_R(I^{n-1} N/I^{n} N) \quad \mbox{for all } n \gg 0,$$
where $N\subseteq M$ is a submodule and $V(I) := \{\fp\in\Spec(R) : I \subseteq \fp\}$. Moreover, in Proposition~\ref{prop:ass}, we show that the set $\Ass_R(M/I^nN) \smallsetminus V(I)$ stabilizes from the very beginning, and that this stabilized set coincides with $\Ass_R (M/N) \smallsetminus V(I)$.

Note that $\mca_M(I) \smallsetminus \mcb_M(I) \subseteq \Ass_R(0:_M I)$. Also, the union in Theorem~\ref{th:intro1} may or may not be disjoint; see Examples~\ref{ex:1} and \ref{ex:2}, respectively. Thus, it is possible that $\mcb_M(I) \subsetneqq \mca_M(I)$.
However, in the graded setup, we show that certain local Vasconcelos invariants of $M/I^nM$ and $I^{n-1}M/I^{n}M$ eventually coincide. In the next part, we discuss this numerical invariant.

In recent years, the Vasconcelos invariant, also known as the $v$-number, which is related to the notion of associated primes, has garnered considerable attention from contemporary researchers. This invariant first appeared in \cite{CSTPV20}, where it was defined for homogeneous ideals in a polynomial ring over a field, in order to study the asymptotic behaviour of the minimum distance function of projective Reed-Muller type codes. Subsequently, Fiorindo-Ghosh provided the following module-theoretic definition. For the rest of this introduction, let $R$ be a Noetherian $\bn$-graded ring, $M$ a finitely generated graded $R$-module, and $I$ a homogeneous ideal of $R$. For $ n \in \bz$, the $n$th graded component of $M$ is denoted by $M_n$.

\begin{definition}{\cite[Def.~1.2]{FG25a}}
    For $\fp \in \Ass_R(M)$, the {\it local Vasconcelos invariant $($or the local $v$-number}$)$ of $M$ at $\fp$ is defined as
    $$v_{\fp}(M) := \inf \{ n : \fp = ( 0 :_R x) \text{ for some } x \in M_n \}.$$
    The {\it Vasconcelos invariant $($or $v$-number$)$} of $M$ is defined to be
    $$ v(M) := \inf \{v_{\fp}(M) : \fp \in \Ass_R(M) \}.$$
    By convention, $v(0) = \infty$.
\end{definition}

Motivated by the result of Brodmann \cite{B79a}, Conca proved in \cite[Thm.~1.1]{C24} that for each $ \fp \in \mca_R(I)$, the function $v_{\fp}(R/I^n) $ is {\it asymptotically linear} in $n$, i.e., $v_{\fp}(R/I^n) = a_{\fp}n + b_{\fp}$ for all $n\gg 0$, where the leading coefficient $a_\fp$ is equal to one of the degrees of the generators of $I$, and $b_{\fp} \in \bz$, provided that $R$ is a domain. The same result was independently proved by Ficarra-Sgroi in \cite[Thm.~3.1]{FS23} when $R$ is a polynomial ring over a field.
Fiorindo-Ghosh strengthen the result of Conca and Ficarra-Sgroi by proving the asymptotic linearity of $v_{\fp}(M/I^nM)$ for $\fp \in \mca_M(I)$ under the assumption that $ (0 :_M I) = 0$; see \cite[Thm.~2.14.(1)]{FG25a}. Note that $(0 :_R I) = 0$ when $R$ is a domain or a polynomial ring over a field. See also \cite{GP25,FG25b} for further generalizations of the result of Fiorindo-Ghosh. The asymptotic behaviour of this invariant has also been studied in \cite{BMS11, SK24, FS25, AS25a, KNS25, FM25, S24, AS25b, MP26} for various filtrations of ideals. It is shown in \cite[Thm.~2.11]{FG25a} that the functions $v(I^{n-1}M/I^nM)$ and $v_{\fp}(I^{n-1}M/I^nM)$ (for $\fp \in \mcb_M(I)$) are always eventually linear, with leading coefficients equal to one of the degrees of the generators of $I$. In addition, if $(0 :_M I) = 0$, then for $ \fp \in \mca_M(I)$, the functions $v_{\fp}(I^{n-1}M/I^nM)$ and $v_{\fp}(M/I^nM)$ coincide asymptotically; cf.~\cite[Thm.~2.14.(1)]{FG25a}.
Keeping these results in mind, the following questions arise naturally.

\begin{question}\label{ques-1}
    Let $(0 :_M I) \neq 0$.
    \begin{enumerate}[\rm (1)]
    \item 
    For each $\fp \in \mca_M(I)$, what is the asymptotic behaviour of $v_{\fp}(M/I^nM)$ as a function of $n$? When $\fp \in \mcb_M(I)$, do the functions $v_{\fp}(M/I^nM)$ and $v_{\fp}(I^{n-1}M/I^nM)$ eventually coincide?
    \item 
    What is the asymptotic behaviour of $v(M/I^nM)$? Is it true that $v(M/I^nM)$ agrees with $v(I^{n-1}M/I^nM)$ for all sufficiently large $n$?
    \end{enumerate}
\end{question}

In what follows, we provide complete answers to all the questions posed in \ref{ques-1}.


\begin{theorem}[See Theorem~\ref{th:main} for stronger results]\label{th:intro2}
Let $R$ be a Noetherian $\bn$-graded ring, $M$ a finitely generated $\bz$-graded $R$-module, and $I$ a homogeneous ideal of $R$. Suppose $I$ is generated by homogeneous elements $y_1,\dots, y_c$ of degree $d_1, \dots, d_c$, respectively. Then, the following hold:
\begin{enumerate}[\rm (1)]
    \item 
    Let $\fp \in \mca_M(I) \smallsetminus \Ass_R(0 :_M I)$. Then, $\fp \in \mcb_M(I)$. Moreover, there exist $a_{\fp} \in \{d_1, \dots, d_c \}$ and $b_{\fp} \in \bz$ such that 
    $$v_{\fp}(M/I^nM) = v_{\fp}(I^{n-1} M/I^n M) = a_{\fp}n + b_{\fp} \; \text{ for all } n \gg 0.$$
    \item 
    Let $\fp \in \mca_M(I) \cap \Ass_R(0 :_M I)$.
    If $d_i \ge 1$ for $ 1 \le i \le c$, then
    $$v_{\fp}(M/I^nM) = v_{\fp}(M) = v_{\fp}( 0:_M I) \; \text{ for all } n \gg 0.$$
    \item 
    Let $(0 :_M I) \neq 0$. Then, $v(M/I^n M) \le  v( 0 :_M I)$ for all $n \gg 0$. Furthermore, if $d_i \ge 1$ for $ 1 \le i \le c$, then
    \[
    v(M/I^n M) =  v( 0 :_M I) \; \mbox{ for all } n \gg 0.
    \]
    \item 
    Let $(0 :_M I) = 0$ and $\mca_M(I) \neq \emptyset$. Then
    $$v(M/I^nM) = v(I^{n-1}M/I^nM) \; \mbox{ for all } n \gg 0,$$
    which is asymptotically linear in $n$ with the leading coefficient in $ \{d_1, \dots, d_c \}$.
\end{enumerate}
\end{theorem}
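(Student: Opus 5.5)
We plan to deduce all four parts from Theorem~\ref{th:intro1} (in its stronger form, Theorem~\ref{th:ass}), an Artin--Rees argument, and the results of \cite{FG25a} on $I^{n-1}M/I^nM$. The basic tool is Artin--Rees: there is $k$ such that $I^nM\cap(0:_MI)=I^{n-k}(I^kM\cap(0:_MI))=0$ for $n>k$, because $I$ kills $(0:_MI)$. Hence for $n\gg0$ the submodule $(0:_MI)$ embeds, degree-preservingly, into $M/I^nM$, and it also embeds in $M$. For graded $x\in(0:_MI)$ the annihilator of $x$ is unchanged under these embeddings. This gives $v_{\fp}(M/I^nM)\le v_{\fp}(0:_MI)$ for $\fp\in\Ass_R(0:_MI)$, and $v(M/I^nM)\le v(0:_MI)$, which is the first claim of (3).

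For (1), take $\fp\in\mca_M(I)\smallsetminus\Ass_R(0:_MI)$. Theorem~\ref{th:intro1} gives $\fp\in\mcb_M(I)$. Since $I^{n-1}M/I^nM\subseteq M/I^nM$, we get $v_{\fp}(M/I^nM)\le v_{\fp}(I^{n-1}M/I^nM)$, and the latter is $a_\fp n+b_\fp$ by \cite[Thm.~2.11]{FG25a}. For the reverse inequality, let $\bar x\in M/I^nM$ be homogeneous with $(I^nM:_R x)=\fp$. If $x\notin I^{n-1}M$, we want to replace $x$ by some $yx$ with $y\in I$ homogeneous, which has degree at least $\deg x$ when $d_i\ge0$. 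Since $\fp\supseteq I$ (the prime $\fp$ lies in $V(I)$ by Theorem~\ref{th:ass}, or by Proposition~\ref{prop:ass} if it lies outside), we use a localized version of the proof in \cite[Thm.~2.14]{FG25a}. The plan is to work in $M'=M/(0:_MI)$, where $(0:_{M'}I)$ is eventually handled by Artin--Rees, and to compare $M/I^nM$ with $M'/I^nM'$ through the exact sequence $0\to(0:_MI)\to M/I^nM\to M'/I^nM'\to0$ for $n\gg0$. Because $\fp\notin\Ass(0:_MI)$, a homogeneous element with annihilator $\fp$ maps to one with annihilator $\fp$. So $v_\fp(M/I^nM)\ge v_\fp(M'/I^nM')$, and we want $(0:_{M'}I)=0$ in order to apply \cite[Thm.~2.14(1)]{FG25a} to $M'$. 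This may fail, so we pass instead to $M''=M/H^0_I(M)$, where $I$ is a nonzerodivisor-type ideal. Then $H^0_I(M)\cap I^nM=0$ for $n\gg0$, and $\fp\notin\Ass H^0_I(M)=\Ass(0:_MI)\cap V(I)$. The map also yields $I^{n-1}M/I^nM\cong I^{n-1}M''/I^nM''$ for $n\gg0$. Applying \cite[Thm.~2.14(1)]{FG25a} to $M''$ then finishes (1). We expect this reduction to be the \emph{main obstacle}, since we must check that the annihilator is preserved and that the degrees match.

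For (2), the inequality $\le v_\fp(0:_MI)$ is already established. Conversely, let $x\in M_m$ with $(I^nM:x)=\fp$. Then $Ix\subseteq I^nM$, and $x\notin I^{n}M$ with $I^nM\subseteq M_{\ge n\min d_i+\text{const}}$. If $x\in I^{n-j}M$ for large $n-j$, its degree tends to $\infty$. Otherwise, Artin--Rees applied to $(I^nM:_MI)=I^{n-1}M+(0:_MI)$ for $n\gg0$ (a standard consequence) lets us write $x=u+z$ with $z\in(0:_MI)$ homogeneous of degree $m$ and $u\in I^{n-1}M$. For bounded $m$ and $n\gg0$ we get $u=0$ by degree. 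Then $(0:_Rz)=\fp$, so $m\ge v_\fp(0:_MI)$. The same argument with $n=\infty$ identifies $v_\fp(M)$. Part (3) follows by minimizing over $\fp$: the primes outside $\Ass(0:_MI)$ have linearly growing $v_\fp$, by (1). For (4), Theorem~\ref{th:intro1} gives $\mca_M(I)=\mcb_M(I)$, and (1) applies to every $\fp$. Both sides are then minima of the same finitely many eventually linear functions, hence eventually linear with leading coefficient among the $d_i$.
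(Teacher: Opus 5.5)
Your proposal is correct and follows essentially the paper's route. For (1) you reduce modulo $\Gamma_I(M)$, exactly as in the proof of Theorem~\ref{th:main1}: you use the exact sequence $0\to\Gamma_I(M)\to M/I^nM\to \overline{M}/I^n\overline{M}\to 0$ (valid for $n\gg0$ by Artin--Rees), together with $\fp\notin\Ass_R(\Gamma_I(M))$ and the isomorphism $I^{n-1}M/I^nM\cong I^{n-1}\overline{M}/I^n\overline{M}$. Parts (3) and (4) come from minimizing over the finitely many stable primes, as in Theorem~\ref{th:main3and4}.

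The only deviation is in (2). There you run the degree argument of Theorem~\ref{th:main2ii} uniformly for all $\fp\in\Ass_R(0:_MI)$, with no case split on whether $\fp\in\mcb_M(I)$; this is legitimate since $d_i\ge1$ is assumed. You also use the elementary bound $\indeg(I^{j}M)\ge j\min_i d_i+\indeg(M)$ in place of \cite[Thm.~2.8]{FG25a}. In fact that argument only needs $x\in I^{n-k}M+(0:_MI)$ with $n-k\to\infty$, which Artin--Rees gives directly, so you do not need the exact colon identity.
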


Theorem~\ref{th:intro2} treats not only the case $(0 :_M I) \neq 0$ but also the situation in which $(0 :_M I) = 0$. Thus, the asymptotic behaviours of (local) Vasconcelos invariant of $M/I^nM$ are obtained in all possible cases. In part (2) (resp., (3)) of Theorem~\ref{th:intro2}, note that, eventually,
$v_{\fp}(M/I^nM) < v_{\fp}(I^{n-1}M/I^nM)$ (resp., $v(M/I^nM) < v(I^{n-1}M/I^nM))$,
whereas parts (1) and (4) describe the cases in which these functions eventually coincide. We construct various examples to complement our results. Example~\ref{ex:1} ensures that the equality in Theorem~\ref{th:intro2}.(3) may not occur if we do not assume that $I$ is generated by homogeneous elements of positive degree. We also show via an example that for $ \fp \in \mcb_M(I)$, the functions $v_{\fp}(M/I^nM)$ and $v_{\fp}(I^{n-1}M/I^nM)$ need not coincide eventually, cf.~Example~\ref{ex:2}. 

Theorem~\ref{th:intro2} follows from more general results proved in Theorem~\ref{th:main}, where the asymptotic behaviour of the (local) Vasconcelos invariant of $M/I^nN$ is studied for a graded submodule $N\subseteq M$. Whenever $ \fp \in \mca^M_N(I) \cap V(I)$,  we completely describe the asymptotic behaviour of $v_{\fp}(M/I^n N)$ by proving that $v_{\fp}(M/I^n N)$ is asymptotically either constant or a polynomial in $n$ of degree one. Moreover, these functions are asymptotically compared with those of $I^{n-1}N/I^nN$. In some cases, these two functions eventually coincide, showing that $v_{\fp}(M/I^nN)$ and $v(M/I^nN)$ eventually depend only on $I$ and $N $, and not on $M$. Surprisingly, in the remaining cases, $v_{\fp}(M/I^nN)$ and $v(M/I^nN)$ eventually do not depend on $N$. Thus, we recover and considerably strengthen the main results of \cite{FG25a}. See Theorem~\ref{th:main} and Remark~\ref{rmk:depend-on-M-N} for the details. When $\fp \in \mca^M_N(I)\smallsetminus V(I)$, we make some observations regarding the behaviour of $v_{\fp}(M/I^nN)$ in Proposition~\ref{prop:vnumber} and Remark~\ref{rmk:gen-ineq}.

We arrange the remaining sections of this article as follows. In Section~\ref{sec:ass}, we prove Theorem~\ref{th:ass}. In Section~\ref{sec:vnumber}, we establish several general lemmas and results leading to the proof of our main theorem, Theorem~\ref{th:main}. Finally, in Section~\ref{sec:examples}, we present concrete examples to illustrate our results.

\subsection*{Acknowledgments}
Pramanik thanks the Government of India for the financial support through the Prime Minister's Research Fellowship for his Ph.D.

\section{Asymptotic prime divisors}\label{sec:ass}

Throughout this section, $R$ is a Noetherian ring (not necessarily graded), $I$ is an ideal of $R$, and $N \subseteq M$ are finitely generated $R$-modules. In this section, we mainly prove Theorem~\ref{th:ass} and Proposition~\ref{prop:ass}. We begin with the following lemma.

\begin{lemma}\label{lem:ass}
    There exists $k \in \bn$ such that for any $n\ge k$, the following hold:
    \begin{enumerate}[\rm (1)]
    \item For $\fp \in V(I)$, we have
    $$\{ x \in M : \fp = (I^n N :_R x) \} \subseteq (0:_M I) + I^{n-1}N.$$
    \item 
    $ \Ass_R \big({M}/{I^n N} \big) \cap V(I) =  \Ass_R \big({ \left( (0 :_M I) + I^{n-1}N \right)}/{I^n N} \big).$
    \end{enumerate}    
\end{lemma}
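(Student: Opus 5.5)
The plan is to combine Artin–Rees with a uniform bound on the annihilator of $I$ modulo $I^nN$.

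\textbf{Step 1 (Artin–Rees).} Apply the Artin–Rees lemma to the submodule $N\subseteq M$. This gives $c\in\bn$ with
\[
I^nN\cap I^cM \;\subseteq\; I^nN\cap IM\cdot I^{c-1}\;\subseteq\; I^{n-c}(I^cN\cap M)
\]
for all $n\ge c$; we will use it in the form stated below in Step 2. We also need a uniform annihilator bound. Since $(0:_M I^j)$ is an ascending chain in the Noetherian module $M$, it stabilizes; write $H:=(0:_M I^\infty)=(0:_M I^{s})$ for some fixed $s$. A second Artin–Rees application, to $H\subseteq M$, gives $h$ with $I^nM\cap H\subseteq I^{n-h}H$ for $n\ge h$. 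The latter is $0$ once $n-h\ge s$.

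\textbf{Step 2 (the containment in (1)).} Let $\fp\in V(I)$ and $x\in M$ with $\fp=(I^nN:_R x)$. Then $I\subseteq\fp$ gives $Ix\subseteq I^nN$. We must show $x\in(0:_M I)+I^{n-1}N$ for $n\gg0$, uniformly in $\fp$ and $x$. We use the standard Artin–Rees variant: there is $k_0$ such that for $n\ge k_0$,
\[
(I^nN:_M I)\cap I^{k_0}M\subseteq I^{n-1}N.
\]
This follows because $I^nN\cap I^{m}M= I^{m-c}(I^{c}M\cap I^nN)$-type equalities, via the Rees module $\bigoplus I^nN\subseteq\bigoplus I^nM$, reduce $(I^nN :_M I)$ for large $n$ to a bounded part plus $I^{n-1}N$. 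Concretely, the graded module $\bigoplus_n (I^nN:_M I)/I^{n-1}N$ should be killed by a power of the Rees ideal $IT$ for large degrees. The cleanest route is to cite the known result that $(I^{n}N:_M I)=I^{n-1}N+(0:_M I)$ for $n\gg0$, which is a generalized Ratliff–Rush / Brodmann-type statement. Its proof goes as follows. Take $x$ with $Ix\subseteq I^nN\subseteq I^nM$. Consider the associated graded / Rees ring $\mathcal R=R[It]$ and the Rees module $\mathcal R(N)$. For $n$ large, $(I^nN:_M I)$ equals $I^{n-1}N$ plus elements $x$ with $Ix\subseteq I^nN\cap I\cdot(\text{bounded})$. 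Applying Artin–Rees to $IM\supseteq I^nN$, we can write $Ix\subseteq I^{n-c}(I^cN\cap IM)$. Then, after subtracting an element $y\in I^{n-1}N$ with $I(x-y)$ small, we get $I(x-y)\subseteq I^{n}N\cap\cdots$, and $x-y$ lands in $(0:_MI^{\infty})$ modulo $I^{n-1}N$. Finally $I(x-y)\in I^nM\cap H=0$ for $n\ge h+s$, so $x-y\in(0:_MI)$. Taking $k$ as the maximum of all the constants proves (1).

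\textbf{Step 3 (deducing (2)).} The inclusion $\supseteq$ is immediate from $((0:_MI)+I^{n-1}N)/I^nN\subseteq M/I^nN$, together with the fact that this submodule is killed by $I$, so its associated primes lie in $V(I)$. For $\subseteq$, take $\fp\in\Ass(M/I^nN)\cap V(I)$ with $\fp=(I^nN:_R x)$; by (1), $\bar x$ lies in the submodule, so $\fp$ is an associated prime of it.

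\textbf{Main obstacle.} The hard part is the uniform equality $(I^nN:_MI)=(0:_MI)+I^{n-1}N$ for $n\gg0$, that is, the subtraction step in Step 2. I would first try to prove it by showing that the graded module $\bigoplus_n (I^{n}N:_MI)/\big(I^{n-1}N+(0:_MI)\big)$ is a finitely generated module over $\mathcal R$ that is annihilated by $It$ in high degree, hence vanishes eventually.
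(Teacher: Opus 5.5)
Your reduction is the same as the paper's. Everything rests on $(I^nN:_MI)=(0:_MI)+I^{n-1}N$ for $n\gg0$. Given that equality, (1) follows because $I\subseteq\fp=(I^nN:_Rx)$ gives $Ix\subseteq I^nN$, and (2) follows from (1) exactly as in your Step 3. The paper does not prove the equality; it cites \cite[Cor.~2.1]{D21}. So if you cite it, as you suggest, you have the paper's proof.

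The gap is your in-line proof of the equality, which does not work as written. Your Artin--Rees displays are vacuous: $I^{n-c}(I^cN\cap M)$ and $I^{n-c}(I^cN\cap IM)$ both equal $I^nN$, so they only restate $Ix\subseteq I^nN$. The decisive step is finding $y\in I^{n-1}N$ with $x-y\in\Gamma_I(M)$. You assert it with no mechanism, yet it is the entire content: when $\Gamma_I(M)=0$ it is literally the equality to be proved. Your final step is fine, since $I(x-y)\subseteq\Gamma_I(M)\cap I^nM=0$ for $n\gg0$. Your ``main obstacle'' plan has the same hole. The module $W=\bigoplus_n (I^nN:_MI)/(I^{n-1}N+(0:_MI))$ is in fact killed by $It$ in every degree, because $I\cdot(I^nN:_MI)\subseteq I^nN$. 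So the only real issue is finite generation of $W$ over the Rees algebra, and that is exactly the Artin--Rees input you have not supplied.

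Here is a direct argument that avoids $\Gamma_I(M)$ altogether. Write $I=(a_1,\dots,a_c)$ and let $\phi\colon M\to M^{\oplus c}$ be $x\mapsto(a_1x,\dots,a_cx)$. Then $\ker\phi=(0:_MI)$ and $\phi^{-1}\big(I^nN^{\oplus c}\big)=(I^nN:_MI)$. Apply Artin--Rees to the submodule $\phi(M)\cap N^{\oplus c}$ of $N^{\oplus c}$. This gives $k$ such that, for $n\ge k$,
\[
\phi(M)\cap I^nN^{\oplus c}=I^{n-k}\big(\phi(M)\cap I^kN^{\oplus c}\big)=\phi\big(I^{n-k}(I^kN:_MI)\big).
\]
Hence for $n\ge k+1$, using $I\cdot(I^kN:_MI)\subseteq I^kN$,
\[
(I^nN:_MI)\subseteq I^{n-k}(I^kN:_MI)+(0:_MI)\subseteq I^{n-k-1}I^kN+(0:_MI)=I^{n-1}N+(0:_MI).
\]
The reverse inclusion is trivial.
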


\begin{proof}
    In view of \cite[Cor.~2.1]{D21}, there exists $k \in \bn$ such that 
    $$ (I^{n} N :_M I) = (0 :_M I) + I^{n-1} N \; \mbox{ for all }  n \ge k.$$
    
    (1) Let $n \ge k$ and $\fp \in V(I)$. If $ x \in M$ such that $\fp = ( I^n N :_R x)$, then 
    \begin{align*}
    x \in (I^n N :_M \fp) \subseteq (I^n N :_M I) =  (0 :_M I) + I^{n-1} N .
    \end{align*}
    Thus, $\{ x \in M : \fp = ( I^n N :_R x) \} \subseteq (0 :_M I) + I^{n-1} N$ for all $n \ge k$. 
    
    (2) Note that $(0:_M I) +I^{n-1} N$ is a submodule of $M$, and every associated prime of $\left( (0 :_M I) + I^{n-1} N \right)/ I^n N$ contains $I$. Therefore 
    $$\Ass_R\left({\big(( 0 :_M I) + I^{n-1}N\big)}/{I^n N} \right) \subseteq \Ass_R \left({M}/{I^n N} \right) \cap V(I) \mbox{ for all } n\ge 1.$$ 
    In order to show the other inclusion, let $n \ge k$ and $\fp \in \Ass_R \left({M}/{I^n N} \right) \cap V(I)$. Then, there exists $x \in M$ such that $(I^n N :_R x) = \fp$. Now, by (1), we obtain $ x \in (0 :_M I) + I^{n-1} N$. Consequently, the result follows.
\end{proof}

We now state and prove the main result of this section.

\begin{theorem}\label{th:ass}
    Assume {\rm Notation~\ref{notation}}. Then,
    $$\Ass_R(M/I^n N) \cap V(I) = \Ass_R(0:_M I) \cup  \Ass_R(I^{n-1} N/I^{n} N) \quad \mbox{for all } n \gg 0.$$
    In particular, $ \mathcal{A}_N^M(I) \cap V(I) = \Ass_R(0 :_M I) \cup \mcb_N(I)$.
\end{theorem}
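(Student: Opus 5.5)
The plan is to reduce everything to Lemma~\ref{lem:ass}(2) and then split the module appearing there as a direct sum, using the Artin--Rees lemma.

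By Lemma~\ref{lem:ass}(2) there is $k$ such that for all $n \ge k$
$$\Ass_R(M/I^nN)\cap V(I) = \Ass_R\big(((0:_M I)+I^{n-1}N)/I^nN\big).$$
So it suffices to show that, for $n \gg 0$,
$$((0:_M I)+I^{n-1}N)/I^nN \;\cong\; (0:_M I)\oplus I^{n-1}N/I^nN .$$
The associated primes of a direct sum are the union of those of the summands, which then gives the displayed equality.

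The key step is to show that $(0:_M I)\cap I^{n-1}N = 0$ for $n \gg 0$. I would apply the Artin--Rees lemma to the submodule $(0:_M I)\subseteq M$. It gives $c$ such that for $m \ge c$
$$(0:_M I)\cap I^{m}M = I^{m-c}\big((0:_M I)\cap I^cM\big).$$
For $m \ge c+1$, the right-hand side lies in $I\cdot(0:_M I) = 0$. Since $I^{n-1}N\subseteq I^{n-1}M$, we get $(0:_M I)\cap I^{n-1}N=0$ whenever $n \ge c+2$.

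Next, fix $n \ge \max\{k, c+2\}$ and consider the map
$$\varphi\colon (0:_M I)\oplus I^{n-1}N/I^nN \to ((0:_M I)+I^{n-1}N)/I^nN,\qquad (a,\, b+I^nN)\mapsto a+b+I^nN.$$
It is well defined, since $I^nN$ is sent into $I^nN$, and it is clearly surjective. For injectivity, suppose $a+b\in I^nN\subseteq I^{n-1}N$. Then $a \in (0:_M I)\cap I^{n-1}N = 0$, and hence $b\in I^nN$. So $\varphi$ is an isomorphism, and
$$\Ass_R(M/I^nN)\cap V(I) = \Ass_R(0:_M I)\cup\Ass_R(I^{n-1}N/I^nN)\quad\text{for all } n \ge \max\{k,c+2\}.$$
Note that $\Ass_R(0:_M I)\subseteq V(I)$ automatically, which is consistent with the left-hand side.

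The ``in particular'' statement follows by taking $n$ large enough that both $\Ass_R(M/I^nN)$ and $\Ass_R(I^{n-1}N/I^nN)$ have stabilized (Kingsbury--Sharp and Brodmann), so that they equal $\mca^M_N(I)$ and $\mcb_N(I)$. The only nontrivial ingredient is the vanishing of the intersection via Artin--Rees. Everything else is formal once Lemma~\ref{lem:ass} is available.
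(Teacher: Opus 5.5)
Your proof is correct and follows essentially the same route as the paper. Both arguments reduce to Lemma~\ref{lem:ass}.(2) and use Artin--Rees to get $(0:_M I)\cap I^{n-1}N=0$ for $n\gg 0$. The only difference is packaging: you write down the splitting $((0:_M I)+I^{n-1}N)/I^nN\cong (0:_M I)\oplus I^{n-1}N/I^nN$ explicitly, whereas the paper gets one inclusion from the short exact sequence \eqref{eq:ses2} and the other from the two submodule embeddings (the paper uses this same direct-sum decomposition later, in the proof of Theorem~\ref{th:main2ii}).
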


\begin{proof}
    By the Artin-Rees lemma (cf.~\cite[17.1.6]{SH06}), we obtain $(0 :_M I) \cap I^n N =0$ for all $n \gg 0$. Thus, for all $n \gg 0$,
    \begin{equation}\label{2nd-iso-thm}
        \big(( 0:_M I) + I^n N \big)/I^n N \cong (0 :_M I)/(0 :_M I)\cap I^n N = (0 :_M I).   
    \end{equation}
    For each $n \ge 1$, we have the following short exact sequence:
    \begin{equation}\label{eq:ses2}
    0 \to \frac{I^{n-1} N}{I^n N} \to \frac{(0 :_M I) + I^{n-1} N}{I^n N} \to \frac{(0 :_M I) + I^{n-1} N}{I^{n-1} N} \to 0.
    \end{equation}
    In view of \eqref{2nd-iso-thm} and \eqref{eq:ses2}, for all $n \gg 0$, we obtain
    \begin{equation}\label{eq:ass}
    \Ass_R \big((( 0:_M I) + I^{n-1} N)/I^n N \big) \subseteq \Ass_R(0 :_M I) \cup \Ass_R \big(I^{n-1}N /I^n N \big).
    \end{equation}
    By \eqref{2nd-iso-thm}, since $(0 :_M I) \cong \big(( 0:_M I) + I^n N \big)/I^n N$ (for $n\gg 0$) and $ I^{n-1} N/I^n N$ (for $n\ge 1$) both are submodules of $ \big(( 0:_M I) + I^{n-1} N \big)/I^n N$, the reverse inclusion in \eqref{eq:ass} is immediate. Consequently, the proof follows from Lemma~\ref{lem:ass}.(2).
\end{proof}   

\begin{remark}\label{2.1-and-2.2}
    For each $n \ge 1$, note that $\Ass_R(M/I^n M) \subseteq V(I)$. As a result, Theorem~\ref{th:intro1} follows from Theorem~\ref{th:ass} by taking $N=M$.
\end{remark}

Note that Theorem~\ref{th:ass} implies that $\Ass_R(0:_M I)\subseteq \Ass_R(M/I^n N)$ for all $n\gg 0$. This also follows from a more general fact mentioned in the following.


\begin{lemma}\label{lem:inj}
    The canonical homomorphism from $(0 :_M I)$ to $M/ I^n N$ defined by $ x \mapsto x + I^n N$, is injective, for all $n \gg 0$.
\end{lemma}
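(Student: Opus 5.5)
The kernel of the canonical map $(0:_M I)\to M/I^nN$, $x\mapsto x+I^nN$, is exactly $(0:_M I)\cap I^nN$. So injectivity for $n\gg 0$ amounts to showing $(0:_M I)\cap I^nN=0$ for all large $n$. This is the Artin--Rees fact already used in the proof of Theorem~\ref{th:ass}; here we spell it out.

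Apply the Artin--Rees lemma (cf.~\cite[17.1.6]{SH06}) to the finitely generated module $N$, the submodule $L:=(0:_M I)\cap N$ of $N$, and the ideal $I$. This gives $c\in\bn$ with
$$L\cap I^nN = I^{n-c}\big(L\cap I^cN\big)\subseteq I^{n-c}L \quad \text{for all } n\ge c.$$
Now take $n\ge c+1$. Then $I^{n-c}L\subseteq I\cdot(0:_M I)=0$. Since $I^nN\subseteq N$, we have $(0:_M I)\cap I^nN = L\cap I^nN$, and this is zero. Hence the kernel vanishes for all $n\ge c+1$.

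There is no real obstacle. The only points to check are that Artin--Rees is applied inside $N$ rather than $M$, since the filtration is $I^nN$, and that $I$ annihilates $L$, so that one extra power of $I$ kills the stable part of the filtration. Alternatively, one could cite \eqref{2nd-iso-thm} directly, as it already contains this conclusion.
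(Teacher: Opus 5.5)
Your proof is correct and follows the paper's own argument. You identify the kernel as $(0:_M I)\cap I^nN$ and show it vanishes by Artin--Rees; the paper cites that lemma in one line, while you carry it out inside $N$ with $L=(0:_M I)\cap N$ and use $I\cdot(0:_M I)=0$ to kill the stable part.
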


\begin{proof}
    Let $M' := (0 :_M I)$. Then, by the Artin-Rees lemma, $I^n N \cap M'=0$ for all $n \gg 0$. Therefore, since the kernel of the canonical homomorphism from $M'$ to $M/ I^n N$ is given by $I^n N \cap M'$, the lemma follows.
\end{proof}

Keeping in mind the hypotheses of Theorem~\ref{th:intro2}, we now describe, in the following lemma, which prime ideals in $\Ass_R(M)$ belong to $\Ass_R(0 :_M I)$. This simple lemma is used repeatedly in the next section. Set $\Gamma_I(M) : = \bigcup_{n \ge 1}(0 :_M I^n)$.

\begin{lemma}\label{lem:ass-Gamma_I-M}
    $\Ass_R(M) \cap V(I) = \Ass_R(\Gamma_I(M)) = \Ass_R(0 :_M I^m)$ for every $m\ge 1$.
\end{lemma}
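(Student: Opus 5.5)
The plan is to prove a chain of three equalities. First I show that $\Ass_R(0:_M I^m) = \Ass_R(M)\cap V(I)$ for every fixed $m\ge 1$. Then I get the statement about $\Gamma_I(M)$ from Noetherianity.

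\textbf{Step 1: $\Ass_R(0:_M I^m)\subseteq \Ass_R(M)\cap V(I)$.} The module $(0:_M I^m)$ is a submodule of $M$, so $\Ass_R(0:_M I^m)\subseteq \Ass_R(M)$. If $\fp=(0:_R x)$ with $x\in(0:_M I^m)$, then $I^m\subseteq \fp$. Since $\fp$ is prime, this gives $I\subseteq\fp$, so $\fp\in V(I)$.

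\textbf{Step 2: the reverse inclusion.} Let $\fp\in\Ass_R(M)\cap V(I)$, and write $\fp=(0:_R x)$ for some $x\in M$. Then $I^m x\subseteq I x\subseteq \fp x=0$, so $x\in(0:_M I^m)$. The annihilator of $x$ computed in the submodule is still $\fp$, hence $\fp\in\Ass_R(0:_M I^m)$. This is the key point, but it is immediate once one remembers that an element realizing $\fp$ is killed by $\fp\supseteq I$.

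\textbf{Step 3: $\Gamma_I(M)$.} Since $M$ is Noetherian, the ascending chain $(0:_M I)\subseteq(0:_M I^2)\subseteq\cdots$ stabilizes. Hence $\Gamma_I(M)=(0:_M I^{m_0})$ for some $m_0$, and Steps 1 and 2 give $\Ass_R(\Gamma_I(M))=\Ass_R(M)\cap V(I)$. Alternatively, one can repeat Steps 1 and 2 directly with $\Gamma_I(M)$: it is a submodule of $M$, an element of $\Gamma_I(M)$ is killed by some $I^n$, and any $x$ with $(0:_Rx)=\fp\supseteq I$ lies in $(0:_M I)\subseteq\Gamma_I(M)$.

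There is no real obstacle here. The only care needed is to note that associated primes of a submodule are computed via annihilators of its elements, and that these agree with the annihilators taken in $M$.
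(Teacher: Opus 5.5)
Your proof is correct and takes essentially the same route as the paper. In both arguments the key point is that any $x$ with $(0:_R x)=\fp\supseteq I$ already lies in $(0:_M I)\subseteq (0:_M I^m)\subseteq \Gamma_I(M)$, and the remaining inclusions follow from the submodule property together with primality of $\fp$; the paper works directly with $\Gamma_I(M)$, as in your ``alternative'' in Step 3, so your Noetherian stabilization of the chain $(0:_M I^m)$ is harmless but not needed.
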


\begin{proof}
    The first equality is trivial. We show the second equality. Fix $m\ge 1$. Note that $\Ass_R(0 :_M I^m) \subseteq \Ass_R(\Gamma_I(M))$ as $(0 :_M I^m) \subseteq \Gamma_I(M)$. Let $\fp \in \Ass_R(\Gamma_I(M))$. Then, by the first equality, we obtain $ \fp = (0 :_R x)$ for some $ x \in M$ and $I \subseteq \fp$. Thus $x \in (0 :_M I) \subseteq (0 :_M I^m)$. Consequently, $ \fp \in \Ass_R(0 :_M I^m)$.
\end{proof}

As a consequence of Theorem~\ref{th:ass}, we obtain the following result.

\begin{corollary}\label{cor:ass}
    Assume \rm{Notation~\ref{notation}}. Then
    $$\Ass_R(I^{n-1}N/I^n N) = \Ass_R \big(M/(\Gamma_I(M) + I^n N) \big) \cap V(I) \quad \text{for all } n \gg 0.$$
    In particular, $ \mcb_N(I) = \mca^{\overline{M}}_{\overline{N}}(I) \cap V(I)$, where $\overline{(-)}$ denotes modulo $\Gamma_I(M)$.
\end{corollary}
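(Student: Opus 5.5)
Apply Theorem~\ref{th:ass} to the pair $\overline{N} \subseteq \overline{M}$, where $\overline{M} := M/\Gamma_I(M)$ and $\overline{N} := (N+\Gamma_I(M))/\Gamma_I(M)$. The point is that $(0:_{\overline{M}} I) = 0$. Indeed, if $x\in M$ with $Ix \subseteq \Gamma_I(M)$, then $Ix \subseteq (0:_M I^m)$ for some $m$, since $Ix$ is finitely generated and the chain $(0:_M I^m)$ stabilizes. Hence $I^{m+1}x=0$, so $x\in \Gamma_I(M)$. With this, Theorem~\ref{th:ass} gives
\[
\Ass_R(\overline{M}/I^n\overline{N}) \cap V(I) = \Ass_R(I^{n-1}\overline{N}/I^n\overline{N}) \quad \text{for all } n\gg 0.
\]
Also $\overline{M}/I^n\overline{N} \cong M/(\Gamma_I(M)+I^nN)$, which is the right-hand side of the claim.

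It remains to compare $I^{n-1}\overline{N}/I^n\overline{N}$ with $I^{n-1}N/I^nN$. We have
\[
I^{j}\overline{N} = (I^jN+\Gamma_I(M))/\Gamma_I(M),
\]
so
\[
I^{n-1}\overline{N}/I^n\overline{N} \cong (I^{n-1}N+\Gamma_I(M))/(I^nN+\Gamma_I(M)).
\]
Since $\Gamma_I(M) = (0:_M I^m)$ for a fixed $m$, the Artin--Rees lemma gives $\Gamma_I(M)\cap I^{j}N=0$ for all $j\gg 0$. Hence, for $n\gg 0$, the natural surjection $I^{n-1}N/I^nN \to (I^{n-1}N+\Gamma_I(M))/(I^nN+\Gamma_I(M))$ has kernel
\[
\big(I^{n-1}N\cap(I^nN+\Gamma_I(M))\big)/I^nN = \big(I^nN + (I^{n-1}N\cap\Gamma_I(M))\big)/I^nN = 0,
\]
using the modular law. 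So the two modules are isomorphic for $n\gg0$, and the displayed equality of associated primes follows.

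The "in particular" statement is then obtained by taking $n\gg0$ so that all the sets have stabilized, recalling Notation~\ref{notation} applied to $\overline{N}\subseteq\overline{M}$. The only real obstacle is checking that $(0:_{\overline{M}}I)=0$, which uses Noetherianity as above; the rest consists of routine applications of Artin--Rees and the modular law.
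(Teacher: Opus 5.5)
Your proof is correct and follows essentially the same route as the paper. You apply Theorem~\ref{th:ass} to $\overline{N}\subseteq\overline{M}$ using $(0:_{\overline{M}}I)=0$, then identify $I^{n-1}\overline{N}/I^n\overline{N}$ with $I^{n-1}N/I^nN$ for $n\gg 0$ via the Artin--Rees vanishing $\Gamma_I(M)\cap I^{n-1}N=0$; you also justify $(0:_{\overline{M}}I)=0$, which the paper only asserts, and your modular-law kernel computation is a compact version of the paper's elementwise injectivity check.
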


\begin{proof}
    Note that $\overline{M} = M/\Gamma_I(M)$ and $I^n\overline{N} = (I^nN + \Gamma_I(M))/\Gamma_I(M)$, hence $\overline{M}/I^n \overline{N} \cong M/(\Gamma_I(M) + I^n N)$ for all $n\ge 0$. Since $(0:_{\overline{M}}I) = 0$, by Theorem~\ref{th:ass}, we obtain $ \mca^{\overline{M}}_{\overline{N}}(I) \cap V(I) = \mcb_{\overline{N}}(I) $. Now, by the Artin-Rees lemma, it follows that $\Gamma_I(M) \cap I^n N = 0$ for all $n \gg 0$. We claim that the map $x + I^n N \mapsto x + (I^n N + \Gamma_I(M))$ from $I^{n-1}N / I^n N$ to $\big(I^{n-1} N + \Gamma_I(M) \big)/ \big(I^{n} N + \Gamma_I(M) \big)$ is an isomorphism for all $n \gg 0$. It suffices to show that it is injective for all $n \gg 0$. Let $x \in I^{n-1} N$ be such that $x \in I^n N + \Gamma_I(M)$, then $x = y + z$ for some $y \in I^n N$ and $ z \in \Gamma_I(M)$. So, it follows that $ x-y= z \in \Gamma_I(M) \cap I^{n-1}N =0$, provided $n$ is sufficiently large. Thus $ x = y \in I^n N$. Therefore, for all $n \gg 0$, we have 
    \begin{align*}
    \frac{I^{n-1} \overline{N}}{I^{n} \overline{N}} 
    \cong \frac{\big(I^{n-1} N + \Gamma_I(M) \big)}{\big(I^{n} N + \Gamma_I(M) \big)}  
    \cong \frac{I^{n-1} N}{I^{n}N},
    \end{align*}
    hence $\Ass_R(I^{n-1}\overline{N}/I^n \overline{N}) = \Ass_R(I^{n-1}N/I^n N)$. So $\mcb_{\overline{N}}(I) = \mcb_N(I)$.
\end{proof}

\begin{remark}\label{rmk:isomorphism}
    With the notation as in Corollary~\ref{cor:ass}, its proof shows that
    $$\frac{I^{n-1} \overline{N}}{I^{n} \overline{N}} \cong \frac{I^{n-1} N}{I^{n}N} \; \mbox{ for all } n \gg 0.$$
    We will use this isomorphism again in the proof of Theorem~\ref{th:main1}.
\end{remark}

The following result is of interest, as it shows that the primes in $\Ass_R(M/I^nN)$ that do not contain $I$ stabilize from the very beginning.

\begin{proposition}\label{prop:ass}
    $\Ass_R(M/I^nN) \smallsetminus V(I) = \Ass_R(M/N) \smallsetminus V(I)$ for all $n \ge 0$.
\end{proposition}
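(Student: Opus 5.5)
The plan is to localize. For a prime $\fp$ with $I \not\subseteq \fp$, we have $I_\fp = R_\fp$, hence $(I^nN)_\fp = I^n_\fp N_\fp = N_\fp$ for every $n \ge 0$. Consequently
$$\big(M/I^nN\big)_\fp \cong M_\fp/(I^nN)_\fp = M_\fp/N_\fp \cong (M/N)_\fp \quad \text{for all } n\ge 0.$$
We then invoke the standard compatibility of associated primes with localization. For a finitely generated module $L$ over the Noetherian ring $R$, a prime $\fp$ lies in $\Ass_R(L)$ if and only if $\fp R_\fp \in \Ass_{R_\fp}(L_\fp)$.

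Applying this to $L = M/I^nN$ and $L = M/N$ gives the result. For $\fp \notin V(I)$, we get
$$\fp \in \Ass_R(M/I^nN) \iff \fp R_\fp \in \Ass_{R_\fp}\big((M/I^nN)_\fp\big) = \Ass_{R_\fp}\big((M/N)_\fp\big) \iff \fp \in \Ass_R(M/N).$$
This proves $\Ass_R(M/I^nN) \smallsetminus V(I) = \Ass_R(M/N) \smallsetminus V(I)$ for every $n \ge 0$. The case $n=0$ is trivial, since $I^0N = N$.

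If one prefers an element-wise argument over the localization criterion, it runs as follows. Suppose $\fp = (I^nN :_R x)$ with $I \not\subseteq \fp$, and pick $s \in I\smallsetminus \fp$. First, $s^n x \in I^nN \subseteq N$. Next, one checks that $\fp = (N :_R s^n x)$ after possibly replacing $x$ by a suitable multiple. This is exactly what the localization criterion packages, so I would simply cite it (e.g.\ Matsumura, Thm.~6.2).

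There is no real obstacle here. The only point to verify is $(I^nN)_\fp = N_\fp$, which is immediate because $I_\fp = R_\fp$ and localization commutes with products of ideals and modules.
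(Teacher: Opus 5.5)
Your localization argument is correct, but it takes a different route from the paper's. The paper works element-wise and proves the two inclusions separately. If $\fp=(N:_R x)$ with $I\not\subseteq\fp$, it picks $r\in I\smallsetminus\fp$ and checks directly that $\fp=(I^nN:_R r^nx)$. For the reverse inclusion it uses the exact sequence $0\to N/I^nN\to M/I^nN\to M/N\to 0$ together with $\Ass_R(N/I^nN)\subseteq V(I)$.

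Your approach reduces both inclusions to the single isomorphism $(M/I^nN)_{\fp}\cong(M/N)_{\fp}$ plus the standard fact that $\Ass$ is compatible with localization over a Noetherian ring. This is shorter and more conceptual. What the paper's version gains is an explicit homogeneous witness: the element $r^nx$ of degree $\deg(r)\,n+\deg(x)$. This witness is reused in Proposition~\ref{prop:vnumber}(2) to bound $v_{\fp}(M/I^nN)$ linearly in $n$, and the localization argument does not supply that.

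Your optional element-wise sketch is wrong as written. Since $s^n\notin\fp=(I^nN:_R x)$, you actually have $s^nx\notin I^nN$. The correct element-wise argument for that direction needs no replacement of $x$. If $ax\in N$, then $s^nax\in I^nN$, so $s^na\in\fp$ and hence $a\in\fp$. Combined with $(I^nN:_R x)\subseteq(N:_R x)$, this gives $(N:_R x)=\fp$ directly. Since your proof does not rely on that aside, it stands.
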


\begin{proof}
    The equality holds trivially when $n=0$. Let $\fp \in \Ass_R (M/N) \smallsetminus V(I)$. Then $\fp = (N :_R x)$ for some $x \in M$. Fix $n \ge 1$ and choose an element $r \in I\smallsetminus \fp$. Clearly, $\fp \subseteq (I^n N :_R r^n x)$. Let $ a \in (I^n N :_R r^n x)$. Then $r^n xa \in I^n N$, and hence
    $$ r^n a \in (I^n N :_R x) \subseteq (N :_R x) = \fp,$$
    which implies that $ a \in \fp$. Consequently, $\fp = (I^n N :_R r^n x)$ for every $n \ge 1$. Thus,
    $$ \Ass_R (M/N) \smallsetminus V(I) \subseteq \Ass_R(M/I^nN) \smallsetminus V(I) \text{ for every } n \ge 1.$$
    To show the reverse inclusion, let $n \ge 1$ and $\fp \in \Ass_R(M/I^n N) \smallsetminus V(I)$. Consider the short exact sequence
    \begin{equation}\label{eq:sesassi}
    0 \to N/I^n N \to M/I^n N \to M/N \to 0.
    \end{equation}
    Since every prime ideal in $\Ass_R (N/ I^n N)$ must contain $I$, we have $\fp \notin \Ass_R (N/ I^n N )$. Therefore, from the short exact sequence \eqref{eq:sesassi}, it follows that $\fp \in \Ass_R (M/N)$. This completes the proof.
\end{proof}

\begin{remark}
    The conclusions of Theorem~\ref{th:ass} and Proposition~\ref{prop:ass} continue to hold if the assumption that $R$ is Noetherian is replaced by the condition that $M$ is Noetherian, since this reduction can be carried out using the same idea as in the proof of \cite[Thm.~1.6(i)]{KS96}.
\end{remark}



\begin{remark}
    Using the theory of primary decomposition, one may observe that $\mca^M_N(I) \subseteq V(I)$ if and only if $I^k M \subseteq N$ for some $k \ge 1$.
\end{remark}




\section{Asymptotic Vasconcelos invariant}\label{sec:vnumber}

In this section, we present results concerning the Vasconcelos invariant and therefore work in the graded setup. The main goal of this section is to prove Theorem~\ref{th:main}, in which the asymptotic behaviour of $v_{\fp}(M/I^n N)$ is completely described under the assumption that $\fp \in \mca^M_N(I) \cap V(I)$. At the end of this section, we make some comments on the behaviour of $v_{\fp}(M/I^nN)$ when $\fp \in \mca^M_N(I)\smallsetminus V(I)$; see Proposition~\ref{prop:vnumber} and Remark~\ref{rmk:gen-ineq}. We begin by establishing some general results.

For a short exact sequence of finitely generated graded modules, we have the following result, which compares the local $v$-numbers of the modules.

\begin{proposition}\label{prop:sesvnumber}
Let $0 \to M' \xlra{\phi} M \xlra{\psi} M'' \to 0$ be a short exact sequence of finitely generated $\bz$-graded modules over a Noetherian $\bn$-graded ring $R$.
\begin{enumerate}[\rm (1)]
\item 
If $\fp \in \Ass_R(M')$, then $v_{\fp}(M) \le v_{\fp}(M')$. Consequently, $v(M) \le v(M')$. 
\item 
If $\fp \in \Ass_R(M) \smallsetminus \Ass_R(M')$, then $v_{\fp}(M'') \le v_{\fp}(M)$.
\end{enumerate}
\end{proposition}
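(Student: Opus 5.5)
We prove both parts straight from the definition of the local $v$-number. The one thing to keep in mind is that the maps in the sequence are graded of degree zero, which is implicit in the setup. Hence $\phi$ maps $M'_n$ into $M_n$, and $\psi$ maps $M_n$ into $M''_n$.

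For (1), let $\fp \in \Ass_R(M')$ and suppose $x \in M'_n$ satisfies $\fp = (0:_R x)$; such $x$ exist because $\fp$ is a graded associated prime. Since $\phi$ is injective, $(0:_R \phi(x)) = (0:_R x) = \fp$. Also $\phi(x) \in M_n$. So $\fp \in \Ass_R(M)$ and $v_{\fp}(M) \le n$. Taking the infimum over all such $x$ gives $v_{\fp}(M) \le v_{\fp}(M')$. For the consequence, $\Ass_R(M') \subseteq \Ass_R(M)$, so
$$v(M) = \inf_{\fp \in \Ass_R(M)} v_{\fp}(M) \le \inf_{\fp \in \Ass_R(M')} v_{\fp}(M) \le \inf_{\fp\in\Ass_R(M')} v_{\fp}(M') = v(M').$$
If $M'=0$, then $v(M')=\infty$ and there is nothing to prove.

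For (2), let $\fp \in \Ass_R(M)\smallsetminus\Ass_R(M')$, and let $x \in M_n$ with $\fp = (0:_R x)$. We claim that $(0:_R \psi(x)) = \fp$. This gives $\fp\in\Ass_R(M'')$, and since $\psi(x) \in M''_n$, taking the infimum over $x$ yields $v_{\fp}(M'') \le v_{\fp}(M)$.

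The inclusion $\fp \subseteq (0:_R\psi(x))$ is clear. The reverse inclusion is the main step. Identify $M'$ with $\phi(M')$ and consider the cyclic submodule $Rx \cong R/\fp$. Every nonzero element of $R/\fp$ has annihilator exactly $\fp$, since $\fp$ is prime. Hence if $Rx \cap M' \neq 0$, a nonzero element $y$ of this intersection satisfies $(0:_R y)=\fp$, giving $\fp \in \Ass_R(M')$, a contradiction. So $Rx \cap M' = 0$.

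Now let $a \in (0:_R \psi(x))$. Then $ax \in \ker\psi = M'$, so $ax \in Rx\cap M' = 0$, hence $a \in (0:_R x)=\fp$. This proves the claim. There is no real obstacle here; the only care needed is the degree-preservation of the maps and the primality argument showing $Rx\cap M'=0$.
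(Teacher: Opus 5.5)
Your proof is correct and follows the paper's argument. For (2), the paper likewise picks $x$ with $\fp=(0:_R x)$ and notes that every nonzero multiple of $x$ has annihilator $\fp$; from $\fp\notin\Ass_R(M')$ it deduces $\phi(M')\cap Rx=0$ and concludes $\fp=(0:_R\psi(x))$. For (1), the paper only cites Prop.~2.5 of Fiorindo--Ghosh (the case of a graded submodule), and your direct check via $(0:_R\phi(x))=(0:_R x)$ establishes the same thing.
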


\begin{proof}
    (1) This is proved in \cite[Prop.~2.5]{FG25a} for the case where $M'$ is a graded submodule of $M$. Hence, the assertions in (1) follow. (2) Let $\fp \in \Ass_R(M) \smallsetminus \Ass_R(M')$. Then $\fp = (0 :_R x)$ for some $x \in M$ such that $v_{\fp}(M) = \deg(x)$. Note that for $r \in R$ if $rx \neq 0$, then $\fp = ( 0 :_R rx)$. Therefore, since $\fp \notin \Ass_R(M')$, we must have $\phi(M') \cap Rx =0$. Hence, it follows that $\fp = (0 :_R \psi(x))$. Thus, $v_{\fp}(M'') \le \deg(\psi(x)) = \deg(x) = v_{\fp}(M)$.
\end{proof}

By Lemma~\ref{lem:ass-Gamma_I-M}, we have $ \Ass_R(0 :_M I) = \Ass_R(\Gamma_I(M)).$ We now show that the modules $(0 :_M I)$ and $\Gamma_I(M)$ in fact have the same (local) Vasconcelos invariant.

\begin{proposition}\label{prop:vnumberchain}
    Let $R$ be a Noetherian $\bn$-graded ring, $M$ a finitely generated $\bz$-graded $R$-module, and $I$ a homogeneous ideal of $R$.
    \begin{enumerate}[\rm (1)]
    \item 
    If $\fp \in \Ass_R(0:_M I)$, then $v_{\fp}(M) = v_{\fp}(\Gamma_I(M)) = v_{\fp}(0 :_M I^n)$ for each $n\ge 1$.
    \item
    $v(\Gamma_I(M)) = v(0 :_M I^n)$ for each $ n \ge 1$.
    \item 
    If $\Ass_R(M) \subseteq V(I)$, then $v(M)=v(\Gamma_I(M))$.
    \end{enumerate}
\end{proposition}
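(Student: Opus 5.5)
The whole proposition comes down to one observation. If $\fp \in V(I)$ and $x \in M$ is homogeneous with $\fp = (0:_R x)$, then $Ix \subseteq \fp x = 0$, so $x \in (0:_M I)$. Hence every homogeneous witness for $\fp$ in $M$ already lies in $(0:_M I) \subseteq (0:_M I^n) \subseteq \Gamma_I(M) \subseteq M$. Annihilators do not depend on the ambient module, so the set of homogeneous $x$ with $(0:_R x)=\fp$ is exactly the same whether $x$ is viewed in $M$, in $\Gamma_I(M)$, or in $(0:_M I^n)$. Taking the infimum of degrees then gives equal local $v$-numbers.

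For (1), I would argue as follows. Let $\fp \in \Ass_R(0:_M I)$. Then $I \subseteq \fp$, since $\fp$ annihilates a nonzero element killed by $I$. By Lemma~\ref{lem:ass-Gamma_I-M}, $\fp$ lies in $\Ass_R(M)$, $\Ass_R(\Gamma_I(M))$ and $\Ass_R(0:_M I^n)$, so all the local invariants involved are defined. Let $W(L)$ be the set of homogeneous $x \in L$ with $(0:_R x) = \fp$, where $L$ is any of these graded submodules of $M$. Associated primes of graded modules over graded rings are homogeneous and are realized by homogeneous elements, so $v_\fp(L)$ is the minimal degree of an element of $W(L)$. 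The observation above shows $W(M) \subseteq (0:_M I)$. The chain of inclusions then forces $W(M)=W(\Gamma_I(M))=W(0:_M I^n)=W(0:_M I)$, and (1) follows, including the equality with $v_\fp(0:_M I)$.

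For (2), Lemma~\ref{lem:ass-Gamma_I-M} gives $\Ass_R(\Gamma_I(M)) = \Ass_R(0:_M I^n) = \Ass_R(0:_M I)$. For each $\fp$ in this common set, (1) gives $v_\fp(\Gamma_I(M)) = v_\fp(0:_M I^n)$. Taking the infimum over the same index set yields (2). If $\Gamma_I(M)=0$, both sides equal $\infty$ by convention.

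For (3), assume $\Ass_R(M) \subseteq V(I)$. By Lemma~\ref{lem:ass-Gamma_I-M}, $\Ass_R(M) = \Ass_R(M)\cap V(I) = \Ass_R(\Gamma_I(M)) = \Ass_R(0:_M I)$. By (1), the local invariants of $M$ and $\Gamma_I(M)$ agree at every prime in this set, so the infima agree and $v(M)=v(\Gamma_I(M))$. The case $M=0$ is trivial.

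There is no serious obstacle here. The only points needing care are that $v_\fp(L)$ is the same whether computed in $L$ or through elements of the ambient module, and that $\fp$ really is an associated prime of each of the modules, which is what Lemma~\ref{lem:ass-Gamma_I-M} supplies.
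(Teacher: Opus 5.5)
Your proof is correct and is essentially the paper's argument. The paper gets the chain $v_{\fp}(M) \le v_{\fp}(\Gamma_I(M)) \le v_{\fp}(0:_M I^{n+1}) \le v_{\fp}(0:_M I^n)$ from the submodule inequality (Proposition~\ref{prop:sesvnumber}.(1)) and closes it with your observation that every $x$ with $\fp = (0:_R x)$ lies in $(0:_M I)$, which amounts to your equality of witness sets; it then deduces (2) and (3) from Lemma~\ref{lem:ass-Gamma_I-M} just as you do.
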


\begin{proof}
    Let $\fp \in \Ass_R(0:_M I)$. Consider the chain of submodules
    $$ (0 :_M I) \subseteq (0 :_M I^2 ) \subseteq (0 :_M I^3) \subseteq \cdots \subseteq \Gamma_I(M) \subseteq M.$$
    Hence, by Proposition~\ref{prop:sesvnumber}.(1), we obtain
    \begin{equation}\label{eq:vnumberchain}
    v_{\fp}(M) \le v_{\fp}(\Gamma_I(M)) \le v_{\fp}(0 :_M I^{n+1}) \le v_{\fp}(0 :_M I^n) \quad \text {for each } n\ge 1.
    \end{equation}
    Note that $I\subseteq \fp$ since $\fp \in \Ass_R(0:_M I)$. Therefore, if $\fp = (0 :_R x)$ for some $x \in M$, then clearly $x \in (0 :_M I)$. Thus, it follows that $v_{\fp}(0 :_M I) \le v_{\fp}(M)$. Combining this inequality with the inequalities in \eqref{eq:vnumberchain}, we get that
    $$ v_{\fp}(M) = v_{\fp}(\Gamma_I(M)) = v_{\fp}(0 :_M I^n)\quad \text {for each } n\ge 1.$$
    This proves (1). Hence, in view of Lemma~\ref{lem:ass-Gamma_I-M}, the last two parts follow.
\end{proof}

In the rest of this section, we shall work with the following setup.

\begin{setup}\label{setup}
    Let $R$ be a Noetherian $\bn$-graded ring. Let $N \subseteq M$ be finitely generated $\bz$-graded $R$-modules. Suppose $I$ is an ideal of $R$ generated by homogeneous elements $y_1,\dots, y_c$ of degrees $d_1, \dots , d_c$ respectively.
\end{setup}

By Lemma~\ref{lem:ass}, for all $n \gg 0$, the associated primes of $M/I^nN$ that contain $I$ coincide with the associated primes of $\left( (0 :_M I) +I^{n-1}N \right)/I^n N$. Next, we show that these two modules have the same asymptotic (local) Vasconcelos invariant.

\begin{proposition}\label{prop:vnumbercompare}
    With \textnormal{Setup~\ref{setup}} and \textnormal{Notation~\ref{notation}}, let $\fp \in \mca^M_N(I)\cap V(I)$. Then, 
    $$v_{\fp}(M/I^n N) = v_{\fp}\left((( 0 :_M I) + I^{n-1}N)/I^n N \right) \; \text{ for all } n \gg 0.$$
\end{proposition}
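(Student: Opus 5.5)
Let $L_n := \big((0:_M I) + I^{n-1}N\big)/I^nN$, which is a graded submodule of $M/I^nN$. The plan is to prove the two inequalities $v_{\fp}(M/I^nN) \le v_{\fp}(L_n)$ and $v_{\fp}(L_n) \le v_{\fp}(M/I^nN)$ separately, for all $n \ge k$. Here $k$ is the integer supplied by Lemma~\ref{lem:ass}, enlarged if necessary so that $\fp \in \Ass_R(M/I^nN)$ for all $n \ge k$; this is possible since $\fp \in \mca^M_N(I)$.

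First, since $\fp \in V(I)$, Lemma~\ref{lem:ass}.(2) gives $\fp \in \Ass_R(L_n)$ for all $n \ge k$. Note that $L_n$ is graded because $(0:_M I)$, $I^{n-1}N$ and $I^nN$ are graded submodules, $I$ being homogeneous. Proposition~\ref{prop:sesvnumber}.(1), applied to the inclusion $L_n \subseteq M/I^nN$, then yields $v_{\fp}(M/I^nN) \le v_{\fp}(L_n)$.

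For the reverse inequality, choose a homogeneous element $\bar x = x + I^nN \in M/I^nN$ with $\fp = (0:_R \bar x) = (I^nN :_R x)$ and $\deg \bar x = v_{\fp}(M/I^nN)$. Such an element exists because associated primes of a graded module over a Noetherian graded ring are annihilators of homogeneous elements, and the infimum defining $v_{\fp}$ is attained. We may take $x$ homogeneous in $M$ of the same degree. By Lemma~\ref{lem:ass}.(1), $x \in (0:_M I) + I^{n-1}N$, so $\bar x$ is a homogeneous element of $L_n$ of the same degree. Its annihilator in $L_n$ equals its annihilator in $M/I^nN$, namely $\fp$. Hence $v_{\fp}(L_n) \le \deg \bar x = v_{\fp}(M/I^nN)$.

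The only delicate point is the degree bookkeeping: $v_{\fp}$ must be computed using homogeneous elements, and Lemma~\ref{lem:ass}.(1) must be applied to a homogeneous representative. Lemma~\ref{lem:ass}.(1) imposes no homogeneity condition, however, so it applies directly and the argument goes through without change. Combining the two inequalities gives equality for all $n \ge k$.
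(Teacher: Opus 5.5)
Your proposal is correct and follows the same route as the paper: the inequality $v_{\fp}(M/I^nN) \le v_{\fp}(L_n)$ comes from Lemma~\ref{lem:ass}.(2) together with Proposition~\ref{prop:sesvnumber}.(1), and the reverse inequality comes from Lemma~\ref{lem:ass}.(1), applied to a homogeneous element realizing $v_{\fp}(M/I^nN)$. You also spell out details the paper leaves implicit: that the infimum is attained, that homogeneous elements lift homogeneously, and that annihilators do not depend on the ambient module.
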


\begin{proof}
In view of Lemma~\ref{lem:ass}.(2) and Proposition~\ref{prop:sesvnumber}.(1), it follows that 
$$ v_{\fp}(M/I^n N) \le v_{\fp}\left((( 0 :_M I) + I^{n-1}N)/I^n N \right) \; \text{ for all } n \gg 0.$$
The reverse inequality holds by Lemma~\ref{lem:ass}.(1).
\end{proof}

When $(0 :_M I) \neq 0$, the following result guarantees a constant upper bound for the (local) Vasconcelos invariant(s) of $M/I^n N$.

\begin{proposition}\label{prop:vbound}
    With \textnormal{Setup~\ref{setup}}, if $\fp \in \Ass_R(0:_M I)$, then 
    $$ v_{\fp}\left( M/{I^n N}\right) \le v_{\fp}(0 :_M I) = v_{\fp}(M) \text{ for all } n \gg 0. $$
    Consequently, $v(M/I^n N) \le v(0:_M I)$ for all $n \gg 0$.
\end{proposition}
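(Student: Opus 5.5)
The plan is to realize $(0:_M I)$ as a graded submodule of $M/I^nN$ for large $n$ and then apply the submodule comparison of local $v$-numbers. By Lemma~\ref{lem:inj}, the canonical map $(0:_M I)\to M/I^nN$, $x\mapsto x+I^nN$, is injective for all $n\gg 0$. This map is homogeneous of degree zero, since $N$ and $I$ are graded, so $I^nN$ is a graded submodule. Hence for $n\gg 0$ we obtain a short exact sequence of finitely generated $\bz$-graded $R$-modules
\[
0 \to (0:_M I) \to M/I^nN \to M/\big((0:_M I)+I^nN\big) \to 0 .
\]

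For $\fp \in \Ass_R(0:_M I)$, Proposition~\ref{prop:sesvnumber}.(1) then gives $v_{\fp}(M/I^nN)\le v_{\fp}(0:_M I)$ for all $n\gg 0$. Here we only need to note that $\fp\in\Ass_R(M/I^nN)$, which is automatic because $\fp$ is associated to the submodule. The equality $v_{\fp}(0:_M I)=v_{\fp}(M)$ is precisely Proposition~\ref{prop:vnumberchain}.(1) with $n=1$.

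For the consequence, assume $(0:_M I)\neq 0$, since otherwise $v(0:_M I)=\infty$ and there is nothing to prove. Choose $\fp\in\Ass_R(0:_M I)$ with $v(0:_M I)=v_{\fp}(0:_M I)$. This is possible because $\Ass_R(0:_M I)$ is finite, so the infimum is attained. Then for $n\gg 0$ we get $v(M/I^nN)\le v_{\fp}(M/I^nN)\le v_{\fp}(0:_M I)=v(0:_M I)$. Alternatively, apply the ``consequently'' part of Proposition~\ref{prop:sesvnumber}.(1) directly.

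There is no real obstacle here. The only point requiring care is that the threshold ``$n\gg 0$'' comes from the Artin--Rees argument in Lemma~\ref{lem:inj}, which is uniform in $\fp$. It therefore works simultaneously for all primes, and hence also for the global statement.
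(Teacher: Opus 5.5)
Your proposal is correct and follows essentially the same route as the paper: it embeds $(0:_M I)$ into $M/I^nN$ via Lemma~\ref{lem:inj}, applies Proposition~\ref{prop:sesvnumber}.(1), and quotes Proposition~\ref{prop:vnumberchain}.(1) for the equality $v_{\fp}(0:_M I)=v_{\fp}(M)$. Your added remarks, that the embedding is homogeneous of degree zero and that the Artin--Rees threshold is uniform in $\fp$, are accurate details the paper leaves implicit.
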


\begin{proof}
    By Lemma~\ref{lem:inj}, the module $(0:_M I)$ is isomorphic to a submodule of $M/I^nN$ for all $n \gg 0$. Then the first inequality follows from Propositions~\ref{prop:sesvnumber}.(1), while the equality $v_{\fp}(0 :_M I) = v_{\fp}(M)$ is shown in Proposition~\ref{prop:vnumberchain}.(1).
\end{proof}

Next, we show that $v_{\fp}(M/I^n N)$ is asymptotically linear in $n$ for
$$\fp \in \big(\mca^M_N(I) \cap V(I)\big) \smallsetminus \Ass_R( 0 :_M I).$$
This considerably strengthens the main result of \cite{FG25a}, where asymptotic linearity is proved under the additional assumption that $(0 :_M I)=0$; see \cite[Thm.~1.9.(1)]{FG25a}.

\begin{theorem}\label{th:main1}
    With \textnormal{Setup~\ref{setup}} and \textnormal{Notation~\ref{notation}}, let $\fp \in \mathcal{A}_N^M(I)\smallsetminus \Ass_R( 0 :_M I)$ such that $I \subseteq \fp$. Then $ \fp \in \mcb_N(I)$. Moreover, there exist $a_{\fp} \in \{d_1, \dots, d_c \}$ and $b_{\fp} \in \bz$ such that 
    $$ v_{\fp}(M/I^n N) = v_{\fp}(I^{n-1} N/ I^n N) = a_{\fp}n + b_{\fp} \; \text{ for all } n \gg 0.$$
\end{theorem}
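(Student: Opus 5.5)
The plan is to reduce to the case $(0:_M I)=0$ by passing to $\overline{M}=M/\Gamma_I(M)$, and then to invoke the asymptotic linearity results of \cite{FG25a} for $I^{n-1}N/I^nN$.

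First, membership in $\mcb_N(I)$ is immediate from Theorem~\ref{th:ass}. Indeed, $\fp\in\mca^M_N(I)\cap V(I)=\Ass_R(0:_MI)\cup\mcb_N(I)$ and $\fp\notin\Ass_R(0:_MI)$, so $\fp\in\mcb_N(I)$. For the $v$-numbers we use Proposition~\ref{prop:vnumbercompare}, which gives
$$v_{\fp}(M/I^nN)=v_{\fp}(L_n), \quad L_n:=\big((0:_MI)+I^{n-1}N\big)/I^nN,$$
for all $n\gg 0$. The short exact sequence \eqref{eq:ses2} has $I^{n-1}N/I^nN$ as submodule of $L_n$, and $\fp\in\Ass_R(I^{n-1}N/I^nN)$ for $n\gg0$. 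So Proposition~\ref{prop:sesvnumber}.(1) yields
$$v_{\fp}(M/I^nN)\le v_{\fp}(I^{n-1}N/I^nN).$$

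For the reverse inequality, let $x\in L_n$ be homogeneous with $\fp=(0:_R x)$ and $\deg x=v_{\fp}(L_n)$. By \eqref{2nd-iso-thm}, the quotient $L_n/(I^{n-1}N/I^nN)$ is isomorphic to $\big((0:_MI)+I^{n-1}N\big)/I^{n-1}N\cong (0:_MI)$ for $n\gg0$. Since $\fp\notin\Ass_R(0:_MI)$, Proposition~\ref{prop:sesvnumber}.(2) does not directly give what we want, because it bounds the quotient, not the submodule. Instead, argue as in its proof. Write $x=z+w$ with $z\in(0:_MI)$ homogeneous and $w\in I^{n-1}N$ homogeneous, all of the same degree. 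Then $\fp x=0$ means $\fp z+\fp w\subseteq I^nN$, so $\fp z\subseteq I^{n-1}N\cap (0:_MI)$, which is $0$ for $n\gg0$ by Artin--Rees. Hence $\fp\subseteq (0:_R z)$.

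The main obstacle is to show that $z$ can be discarded. If $z\ne0$, then $(0:_Rz)\supseteq\fp$ is an annihilator of an element of $(0:_MI)$. Work instead in $\overline{M}$: the image $\bar x$ equals the image of $w$, lying in $I^{n-1}\overline N/I^n\overline N\cong I^{n-1}N/I^nN$ by Remark~\ref{rmk:isomorphism}. We check $(0:_R\bar w)=\fp$ in $\overline{M}/I^n\overline{N}$ as follows. Suppose $a\notin\fp$ and $a w\in I^nN+\Gamma_I(M)$. Then $a x\in I^nN+\Gamma_I(M)$, and hence $a x$ maps into $\big(\Gamma_I(M)+I^nN\big)/I^nN$, which is isomorphic to $\Gamma_I(M)$ for $n\gg0$. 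Since $Rax\cong R/\fp$, this forces $\fp\in\Ass(\Gamma_I(M))=\Ass(0:_MI)$ by Lemma~\ref{lem:ass-Gamma_I-M}, a contradiction. So $\fp=(0:_R\bar w)$ with $\deg w=\deg x$, and via the isomorphism $I^{n-1}\overline N/I^n\overline N\cong I^{n-1}N/I^nN$ (which is degree-preserving), we get $v_{\fp}(I^{n-1}N/I^nN)\le v_{\fp}(M/I^nN)$. This works because $w$ and its image have the same annihilator under the isomorphism.

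Finally, by \cite[Thm.~2.11]{FG25a}, applied to the graded Rees-type module $\bigoplus_n I^{n}N/I^{n+1}N$, the function $v_{\fp}(I^{n-1}N/I^nN)$ is eventually $a_{\fp}n+b_{\fp}$ with $a_{\fp}\in\{d_1,\dots,d_c\}$. This gives the stated formula.
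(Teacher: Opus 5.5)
Your proof is correct and uses the same ingredients as the paper: membership in $\mcb_N(I)$ via Theorem~\ref{th:ass}, then passage to $\overline{M}=M/\Gamma_I(M)$ using $\fp\notin\Ass_R(\Gamma_I(M))$, the Artin--Rees splitting and the isomorphism of Remark~\ref{rmk:isomorphism}, and finally linearity from \cite[Thm.~2.11]{FG25a}. The only difference is the order of steps: you apply Proposition~\ref{prop:vnumbercompare} to $M$ and rerun the argument of Proposition~\ref{prop:sesvnumber}.(2) on a minimal-degree witness $x=z+w$, whereas the paper applies Proposition~\ref{prop:sesvnumber}.(2) to $0\to(\Gamma_I(M)+I^nN)/I^nN\to M/I^nN\to M/(\Gamma_I(M)+I^nN)\to 0$ and then Proposition~\ref{prop:vnumbercompare} to $\overline{M}$, where $(0:_{\overline{M}}I)=0$.
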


\begin{proof}
    Since $\fp \in \mca^M_N(I) \smallsetminus \Ass_R(0 :_M I)$ and $I \subseteq \fp$, by Theorem~\ref{th:ass} and Corollary~\ref{cor:ass}, it follows that 
    $$\fp \in \mcb_N(I) \subseteq \Ass_R \big(M/(\Gamma_I(M) + I^n N) \big) \; \mbox{ for all } n\gg 0.$$
    Hence, in view of \cite[Thm.~2.11]{FG25a}, there exist $a_{\fp} \in \{d_1, \dots,d_c\}$ and $b_{\fp} \in \bz$ such that
    \begin{equation}\label{FG-2.11-linear}
        v_{\fp}(I^{n-1}N/I^n N) = a_{\fp}n+b_{\fp} \; \mbox{ for all } n\gg 0.
    \end{equation}
    Therefore, by Proposition~\ref{prop:vnumbercompare}, if $( 0:_M I) =0 $, then
    $$v_{\fp}(M/I^n N) = v_{\fp}(I^{n-1} N/I^n N)= a_{\fp}n+b_{\fp} \; \mbox{ for all } n \gg 0.$$
    So, we may assume that $(0 :_M I) \neq 0$. Let $\overline{(-)}$ denote reduction modulo $\Gamma_I(M)$. Then $(0 :_{\overline{M}} I) = 0$. Consequently, Proposition~\ref{prop:vnumbercompare} yields that
    \begin{equation}\label{eq:th3.6i}
    v_{\fp} \big(M/(\Gamma_I(M) + I^n N) \big) 
    = v_{\fp} \big(\overline{M}/I^n \overline{N} \big) 
    = v_{\fp} \big(I^{n-1}\overline{N}/I^n \overline{N} \big) \; \mbox{ for all } n \gg 0.
    \end{equation}
    In view of Remark~\ref{rmk:isomorphism}, $\frac{I^{n-1} \overline{N}}{I^{n} \overline{N}} \cong \frac{I^{n-1} N}{I^{n}N}$ for all $n \gg 0$. Thus, since $\fp \in \mcb_N(I)$,
    \begin{equation}\label{eq:th3.6ii}
    v_{\fp} \big(I^{n-1}\overline{N}/I^n \overline{N} \big) = v_{\fp} \big(I^{n-1}N/I^n N \big) \; \text{ for all } n\gg 0.
    \end{equation} 
    By the Artin-Rees lemma, $\Gamma_I(M) \cap I^n N = 0$ for all $n\gg 0$. So
    \begin{center}
        $(\Gamma_I(M) + I^n N)/ I^n N \cong \Gamma_I(M)/(\Gamma_I(M)\cap I^n N) = \Gamma_I(M)$ \;for all $n\gg 0$.
    \end{center}
    Therefore, since $\fp \notin \Ass_R( 0 :_M I) = \Ass(\Gamma_I(M))$ (cf.~Lemma~\ref{lem:ass-Gamma_I-M}), it follows that
    $$\fp \notin \Ass_R \big((\Gamma_I(M) + I^n N)/ I^n N \big) \; \mbox{ for all } n \gg 0.$$
    Now, for every $n \ge 1$, consider the short exact sequence 
    \begin{equation*}
    0 \to \frac{\Gamma_I(M) +I^n N}{I^n N} \to \frac{M}{I^n N} \to \frac{M}{\Gamma_I(M) + I^n N} \to 0.
    \end{equation*}
    In view of Proposition~\ref{prop:sesvnumber}.(2), we obtain
    \begin{equation}\label{eq:th3.6iii}
    v_{\fp} \big(M/(\Gamma_I(M) + I^n N) \big) \le v_{\fp}(M/I^n N) \le v_{\fp}(I^{n-1} N/ I^n N) \; \mbox{ for all } n \gg 0,
    \end{equation}
    where the second inequality is obtained using Proposition~\ref{prop:sesvnumber}.(1). Hence, our assertion follows from \eqref{FG-2.11-linear}, \eqref{eq:th3.6i}, \eqref{eq:th3.6ii}, and \eqref{eq:th3.6iii}.
\end{proof}

We further investigate the asymptotic behavior of $v_{\fp}(M/I^n N)$ as a function of $n$, where $\fp \in \Ass_R(0 :_M I)$. Since the sets $\Ass_R(0 :_M I)$ and $\mcb_N(I)$ need not be disjoint (cf.~Example~\ref{ex:2}), there are two cases:  $\fp \in \mcb_N(I)$ or $\fp \notin \mcb_N(I)$. In the following theorem, we treat the case $\fp \notin \mcb_N(I)$.

\begin{theorem}\label{th:main2i}
    With \textnormal{Setup~\ref{setup}} and \textnormal{Notation~\ref{notation}}, let $\fp \in \mathcal{A}_N^M(I)\smallsetminus \mathcal{B}_N(I)$. Suppose $I \subseteq \fp$. Then, 
    $$v_{\fp}(M/I^n N) = v_{\fp}( 0 :_M I) = v_{\fp}(M) \; \text{ for all } n \gg 0.$$
\end{theorem}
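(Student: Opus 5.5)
Since $\fp \in \mca^M_N(I)\cap V(I)$ and $\fp\notin \mcb_N(I)$, Theorem~\ref{th:ass} gives $\fp \in \Ass_R(0:_M I)$. Proposition~\ref{prop:vbound} then yields the upper bound
$$v_{\fp}(M/I^nN) \le v_{\fp}(0:_M I) = v_{\fp}(M) \quad \text{for all } n\gg 0.$$
So the whole task is to prove the reverse inequality $v_{\fp}(M/I^nN)\ge v_{\fp}(0:_M I)$ for $n\gg 0$.

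For the reverse inequality I would pass to the submodule $L_n := ((0:_M I)+I^{n-1}N)/I^nN$. By Proposition~\ref{prop:vnumbercompare}, $v_{\fp}(M/I^nN)=v_{\fp}(L_n)$ for $n \gg 0$. Consider the short exact sequence \eqref{eq:ses2}
$$0\to I^{n-1}N/I^nN \to L_n \to ((0:_M I)+I^{n-1}N)/I^{n-1}N \to 0.$$
Since $\fp \notin \mcb_N(I)$, we have $\fp\notin \Ass_R(I^{n-1}N/I^nN)$ for $n\gg0$. Hence Proposition~\ref{prop:sesvnumber}.(2) gives
$$v_{\fp}\big(((0:_M I)+I^{n-1}N)/I^{n-1}N\big)\le v_{\fp}(L_n).$$
By the Artin--Rees isomorphism \eqref{2nd-iso-thm}, applied with $n-1$ in place of $n$ (valid for $n \gg 0$), the left-hand module is isomorphic to $(0:_M I)$ as a graded module. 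The isomorphism $x\mapsto x+I^{n-1}N$ is degree-preserving, so local $v$-numbers agree. Combining these gives $v_{\fp}(0:_M I)\le v_{\fp}(L_n)=v_{\fp}(M/I^nN)$ for $n\gg0$, which together with the upper bound proves the theorem. The last equality $v_{\fp}(0:_M I)=v_{\fp}(M)$ is Proposition~\ref{prop:vnumberchain}.(1).

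The main point to verify is that all the "$n\gg0$" thresholds can be taken uniformly. These are: Lemma~\ref{lem:ass} and Proposition~\ref{prop:vnumbercompare}, the Artin--Rees bound $(0:_MI)\cap I^{n-1}N=0$, and the stabilization of $\Ass_R(I^{n-1}N/I^nN)$ to $\mcb_N(I)$. Each holds for all sufficiently large $n$, so we take the maximum of the thresholds. I would also check that the isomorphism in \eqref{2nd-iso-thm} is one of graded modules; it is, because it is induced by the inclusion of a graded submodule. Notably, the argument needs no positivity assumption on the $d_i$.
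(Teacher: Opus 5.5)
Your proposal is correct and is essentially the paper's proof. Both get $\fp \in \Ass_R(0:_M I)$ from Theorem~\ref{th:ass}, obtain the lower bound by applying Proposition~\ref{prop:sesvnumber}.(2) to \eqref{eq:ses2} together with Proposition~\ref{prop:vnumbercompare} and the Artin--Rees isomorphism (with $n-1$ in place of $n$), and close with Proposition~\ref{prop:vbound} and Proposition~\ref{prop:vnumberchain}.(1). The one hypothesis you leave implicit, which the paper states explicitly, is that $\fp \in \Ass_R(L_n)$ for $n \gg 0$ (by Lemma~\ref{lem:ass}.(2)); this is needed to invoke Proposition~\ref{prop:sesvnumber}.(2).
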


\begin{proof}
    Since $\fp \in \mathcal{A}_N^M(I)\smallsetminus \mathcal{B}_N(I)$ and $I \subseteq \fp$, Theorem~\ref{th:ass} yields $\fp \in \Ass_R(0 :_M I)$, and Lemma~\ref{lem:ass}.(2) shows that
    $$\fp \in \Ass_R \left( \frac{(0 :_M I) + I^{n-1}N}{I^{n} N} \right) \; \mbox{ for all } n \gg 0. $$
    As $\fp \notin \mcb_N(I)$, in view of the short exact sequence \eqref{eq:ses2}, by Proposition~\ref{prop:sesvnumber}.(2), we obtain 
    \begin{equation} \label{eq:vnumberineq}
    v_{\fp} \left( \frac{(0 :_M I) + I^{n-1} N}{I^{n-1} N} \right) 
    \le v_{\fp} \left( \frac{(0 :_M I) + I^{n-1}N}{I^{n} N} \right) \;
    \mbox{ for all } n\gg 0.
    \end{equation}
    Now, by the Artin-Rees lemma, $(0 :_M I) \cap I^n N = 0$ for all $n \gg 0$. Therefore, 
    $${\big((0 :_M I) + I^{n}N \big)}/{I^{n} N} \cong (0 :_M I)/\big( (0 :_M I) \cap I^{n}N \big) = (0 :_M I) \; \mbox{ for all } n \gg 0.$$ 
    Thus, for all $n \gg 0$, we get
    \begin{align*}
    v_{\fp}(0 :_M I) 
    & = v_{\fp} \left( \frac{(0 :_M I) + I^{n-1}N}{I^{n-1} N} \right)\\
    & \le v_{\fp} \left( \frac{(0 :_M I) + I^{n-1}N}{I^{n} N} \right) \quad \text{[by \eqref{eq:vnumberineq}]}\\
    & = v_{\fp} \left( M/I^n N \right) \quad \text{[by Proposition~\ref{prop:vnumbercompare}]}\\
    & \le v_{\fp}(0 :_M I) \quad \text{[by Proposition~\ref{prop:vbound}]}.   
    \end{align*} 
    This implies the first equality. The second equality $v_{\fp}( 0 :_M I) = v_{\fp}(M)$ follows from Proposition~\ref{prop:vnumberchain}.(1).
\end{proof}

Next, we consider the case $\fp \in \mcb_N(I)$. For a graded module $M$, we denote by
$$\indeg(M) := \inf \{ n \in \bz : M_n \neq 0 \}, $$ 
the \emph{initial degree} of $M$. By convention, $\indeg(0) = \infty$.

\begin{theorem}\label{th:main2ii}
    With \textnormal{Setup~\ref{setup}} and \textnormal{Notation~\ref{notation}}, let $\fp \in \mcb_N(I)\cap \Ass_R(0 :_M I)$. Suppose $d_i \ge 1$ for $1 \le i \le c$. Then 
    $$v_{\fp}(M/I^n N) = v_{\fp}( 0 :_M I) = v_{\fp}(M) \; \text{ for all } n \gg 0.$$
\end{theorem}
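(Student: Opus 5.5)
Since $\fp \in \Ass_R(0:_M I)$, Proposition~\ref{prop:vbound} already gives $v_{\fp}(M/I^nN) \le v_{\fp}(0:_M I) = v_{\fp}(M)$ for all $n \gg 0$. So only the reverse inequality $v_{\fp}(M/I^nN) \ge v_{\fp}(0:_M I)$ for $n \gg 0$ needs proof. By Proposition~\ref{prop:vnumbercompare}, it suffices to prove it for the module $\big((0:_M I)+I^{n-1}N\big)/I^nN$. Equivalently: for $n\gg 0$, if $x \in (0:_M I)+I^{n-1}N$ is homogeneous with $(I^nN:_R x)=\fp$, then $\deg x \ge v_{\fp}(0:_M I)$.

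Write such an $x$ as $x = z + w$ with $z \in (0:_M I)$ and $w \in I^{n-1}N$, both homogeneous of degree $\deg x$; this is possible since all modules involved are graded. We split into two cases.

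\emph{Case $z \notin I^nN$.} For $n\gg 0$, Artin--Rees gives $(0:_M I)\cap I^{n-1}N = 0$, which is what makes the following work. For $a\in\fp$ we have $ax\in I^nN$, so $az = ax - aw \in (0:_M I)\cap I^{n-1}N = 0$. Hence $\fp \subseteq (0:_R z)$, and $(0:_R z)$ is proper since $z\neq 0$. We would like $(0:_R z)=\fp$, but a priori $(0:_R z)$ could be strictly larger than $\fp$. To handle this, take $r \in (0:_R z)$. Then $rx = rw \in I^{n-1}N$. Since $(0:_M I)+I^{n-1}N$ modulo $I^nN$ has $z$-part controlled, we have $r z=0$, so $rx\in I^{n-1}N$ while $rx\notin I^nN$ unless $r\in\fp$. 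This alone is not a contradiction. A cleaner route is to prime-avoid inside $Rz \cong R/(0:_R z)$: since $\fp$ is associated to $(0:_M I)$, it suffices to show that some element of $(0:_M I)$ of degree $\le \deg x$ has annihilator exactly $\fp$.

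\emph{Degree argument.} This is where $d_i\ge 1$ should enter. The component $w$ lies in $I^{n-1}N$, so it is a sum of terms $y^{\alpha}N$ of degree at least $(n-1)\min d_i + \indeg(N) \to \infty$. Hence for $n \gg 0$ we have $\deg x = \deg w \ge (n-1)\min_i d_i + \indeg(N)$ whenever $w\neq 0$, and this is greater than the constant $v_{\fp}(0:_M I)$. The case $w=0$ is immediate: then $x=z\in(0:_M I)$, and since $(0:_M I)\cap I^nN=0$, we get $(0:_R z)=(I^nN:_R z)=\fp$, so $\deg x\ge v_{\fp}(0:_M I)$. If $w\neq 0$, the degree bound gives $\deg x$ larger than $v_{\fp}(0:_M I)$ outright. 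So the inequality holds in all cases, and the awkward annihilator comparison of the first case is not needed.

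Thus the whole proof reduces to the observation that the $I^{n-1}N$ part forces large degree (using $d_i\ge1$ and $N$ finitely generated, so $\indeg N>-\infty$), while a pure $(0:_M I)$ part gives an honest witness. The main subtlety I expect is the case $w\notin I^nN$ but of small degree. Because of the positivity of the $d_i$, such $w$ cannot exist for large $n$, since $(I^{n-1}N)_j=0$ for $j<(n-1)\min d_i+\indeg N$.
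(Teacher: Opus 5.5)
Your argument is correct and is essentially the paper's proof. You get the upper bound from Proposition~\ref{prop:vbound}. You place a witness $x$ in $(0:_M I)+I^{n-1}N$ and split it into homogeneous components. A degree bound using $d_i\ge 1$ disposes of a nonzero $I^{n-1}N$-component, and Artin--Rees shows the remaining $(0:_M I)$-component has annihilator exactly $\fp$. The abandoned ``Case $z\notin I^nN$'' paragraph is not needed and should simply be deleted.

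The one difference is that you use the elementary bound $\indeg(I^{n-1}N)\ge (n-1)\min_i d_i+\indeg(N)$ instead of citing the eventual linearity of $\indeg(I^nN)$ from \cite{FG25a}. As a result your argument never uses $\fp\in\mcb_N(I)$; the paper needs that hypothesis only to guarantee $I^nN\neq 0$ before invoking the cited result.
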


\begin{proof}
    In view of Proposition~\ref{prop:vbound}, it suffices to show that
    \begin{equation}\label{eq:vnumberlbound}
    v_{\fp}(0:_M I) \le v_{\fp}(M/I^n N) \; \text{ for all } n \gg 0. 
    \end{equation}
    Let $ k_1 \ge 1$ be such that $ \fp \in \Ass_R(I^{n-1} N/I^n N)$ for all $n \ge k_1$. Then, for each $ n\ge k_1$, we have $I^{n-1} N/I^n N \neq 0$, and hence $ I^n N \neq 0$. Set $I^n N := 0$ for $n < 0$. Note that the module $\mathscr{L} : = \bigoplus_{n \in \bz} I^n N$ is a finitely generated $\bz^2$-graded module over the $\bn^2$-graded ring $T := \bigoplus_{n \in \bn} I^n$, where the $(m,n)$th graded components of $\mathscr{L}$ and $T$ are $(I^n N)_m$ and $(I^n)_m$, respectively, for every $(m,n) \in \bz^2$. So, in view of \cite[Thm.~2.8.(1)]{FG25a}, there exist $k_2 \ge 1$, $a \in \{d_1, \dots, d_c \}$ and $b \in \bz$ such that 
    $$\indeg(I^n N) = an + b \; \mbox{ for all }  n \ge k_2.$$ 
    Since $d_i \ge 1$ for $1 \le i \le c$, the function $\indeg(I^n N)$, for $n \ge k_2$, is strictly increasing. Choose $ k_3 \ge k_2$ such that $ \indeg(I^{k_3} N) > v_{\fp}(M)$. Now, by the Artin-Rees lemma, there exists $k_4 \ge 1$ such that $(0 :_M I) \cap I^n N = 0$ for all $n \ge k_4$. By Lemma~\ref{lem:ass}.(1), we choose $k > \max \{ k_1, k_3, k_4 \}$ such that
    \begin{equation}\label{2.1-subset}
        \{ x \in M : \fp = ( I^n N :_R x) \} \subseteq (0 :_M I) + I^{n-1} N \; \text{ for all } n \ge k.
    \end{equation}
    We claim that \eqref{eq:vnumberlbound} holds for all $n \ge k$. In order to prove the claim, fix $n \ge k$. Let $x \in M$ be a homogeneous element such that $\fp = (I^n N :_R x)$ and $ \deg(x) = v_{\fp}(M/I^n N)$. Then, by \eqref{2.1-subset}, $x \in (0 :_M I) + I^{n-1} N$. So $ x = y + z$ for some $y \in (0 :_M I)$ and $ z \in I^{n-1} N$. Since $(0 :_M I) \cap I^{n-1} N = 0$, the module $(0 :_M I) + I^{n-1} N$ is the (internal) direct sum of $(0 :_M I)$ and $I^{n-1}N$. As $x$ is homogeneous, $y$ and $z$ must also be homogeneous elements of degree equal to $\deg(x)$. Hence, in view of Proposition~\ref{prop:vbound}, we obtain 
    $$\deg(z) = \deg(x) =v_{\fp}(M/I^n N) \le v_{\fp}(M) < \indeg(I^{n-1} N),$$ 
    which implies that $z=0$. Consequently, $x = y \in (0 :_M I)$. Note that
    $$\fp = (I^n N :_R x) \supseteq (0 :_R x).$$
    Let $r \in \fp$. Then $rx \in (0:_M I) \cap I^nN = 0$ since $x \in (0 :_M I)$. Thus $r\in (0 :_R x)$. This shows that $\fp = (0 :_R x)$. Therefore, $v_{\fp}(0 :_M I) \le \deg(x) =v_{\fp}(M/I^n N)$. This completes the proof.
\end{proof}

\begin{remark}\label{rmk:eventually-cons}
    In Theorem~\ref{th:ass}, it is shown that
    \[
    \mathcal{A}_N^M(I) \cap V(I) = \Ass_R(0 :_M I) \cup \mcb_N(I).
    \]
    Let $\fp \in \Ass_R( 0 :_M I)$. Then there exists a homogeneous element $ x \in (0 :_M I) $ such that $\fp = (0 :_R x) $ and $\deg(x) = v_{\fp}( 0 :_M I)$. In view of Lemma~\ref{lem:inj}, we also have $\fp = (I^n N :_R x)$. Hence, by Theorems~\ref{th:main2i} and \ref{th:main2ii},
    $$v_{\fp}(M/I^n N) = v_{\fp}( 0 :_M I) = v_{\fp}(M) = \deg(x) \; \text{ for all } n \gg 0.$$
\end{remark}

Next, we give a description of the asymptotic behaviour of the Vasconcelos invariant of $M/I^nN$, which depends on whether the module $(0 :_M I)$ is zero or not.

\begin{theorem}\label{th:main3and4}
    Assume \textnormal{Setup~\ref{setup}} and \textnormal{Notation~\ref{notation}}. Then the following hold:
    \begin{enumerate}[\rm(1)]
    \item 
    Let $(0 :_M I) \neq 0$. Then, $v \big(M/I^n N \big) \le v( 0 :_M I) \text{ for all } n\gg 0.$
    Furthermore, if $d_i \ge 1$ for $1 \le i \le c$, and if $\mca^M_N(I) \subseteq V(I)$, then
    $$
    v \big(M/I^n N \big) =  v( 0 :_M I) \; \mbox{ for all } n \gg 0.
    $$
    \item 
    Let $( 0 :_M I) = 0$ and $\emptyset \neq \mca^M_N(I) \subseteq V(I)$. Then, 
    $$v\big(M/I^n N\big) = v\big(I^{n-1} N/I^n N\big) \; \mbox{ for all } n \gg 0,$$ 
    which is linear in $n$ with the leading coefficient in $\{ d_1, \dots, d_c \}$.
    \end{enumerate}
\end{theorem}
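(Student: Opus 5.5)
For part (1), the inequality $v(M/I^nN)\le v(0:_MI)$ for $n\gg 0$ is already the consequence in Proposition~\ref{prop:vbound}. For the equality, the plan is to use $\mca^M_N(I)\subseteq V(I)$ and Theorem~\ref{th:ass} to write, for $n\gg 0$,
\[
\Ass_R(M/I^nN)=\Ass_R(0:_MI)\cup\mcb_N(I).
\]
This is a finite set, independent of $n$. Since $v(M/I^nN)$ is the minimum of $v_\fp(M/I^nN)$ over this set, I will handle each $\fp$ separately and then take $n$ larger than all the finitely many thresholds. If $\fp\in\Ass_R(0:_MI)$, then Theorems~\ref{th:main2i} and \ref{th:main2ii} give $v_\fp(M/I^nN)=v_\fp(0:_MI)$ for $n\gg 0$; the second theorem uses $d_i\ge1$ and covers the case $\fp\in\mcb_N(I)$. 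Every such $\fp$ therefore contributes a constant, and the minimum of these constants is exactly $v(0:_MI)$.

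It remains to treat $\fp\in\mcb_N(I)\smallsetminus\Ass_R(0:_MI)$. By Theorem~\ref{th:main1}, $v_\fp(M/I^nN)=a_\fp n+b_\fp$ with $a_\fp\in\{d_1,\dots,d_c\}$, so $a_\fp\ge 1$ and these values tend to infinity. Hence for $n\gg0$ they exceed the constant $v(0:_MI)$, which is finite because $(0:_MI)\neq0$ and so has an associated prime. The minimum therefore equals $v(0:_MI)$ eventually. This divergence step is the only place where positivity of the degrees matters beyond Theorem~\ref{th:main2ii}, and it is where I expect care is needed.

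For part (2), $(0:_MI)=0$ gives $\Ass_R(0:_MI)=\emptyset$, so $\Ass_R(M/I^nN)=\mcb_N(I)$ for $n\gg0$ by Theorem~\ref{th:ass} and the containment $\mca^M_N(I)\subseteq V(I)$; this set is nonempty by hypothesis. By Theorem~\ref{th:main1}, every $\fp\in\mcb_N(I)$ satisfies
\[
v_\fp(M/I^nN)=v_\fp(I^{n-1}N/I^nN)=a_\fp n+b_\fp
\]
for $n\gg0$. Also $\Ass_R(I^{n-1}N/I^nN)=\mcb_N(I)$ for $n\gg 0$. Taking the minimum over the same finite nonempty set on both sides gives $v(M/I^nN)=v(I^{n-1}N/I^nN)$.

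Finally, the minimum of finitely many linear functions $a_\fp n+b_\fp$ eventually equals a single one of them: the one with the least slope, and among those the least intercept. So $v(M/I^nN)$ is eventually linear with leading coefficient in $\{d_1,\dots,d_c\}$. Alternatively, this linearity can be quoted from \cite[Thm.~2.11]{FG25a} for $v(I^{n-1}N/I^nN)$.
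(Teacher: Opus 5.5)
Your proposal is correct and follows essentially the same route as the paper. Part (1) combines Proposition~\ref{prop:vbound} with a prime-by-prime analysis over $\Ass_R(0:_M I)\cup\mcb_N(I)$: Theorems~\ref{th:main2i} and \ref{th:main2ii} (packaged in the paper as Remark~\ref{rmk:eventually-cons}) give the constant branches, and Theorem~\ref{th:main1} gives linear branches of positive slope that eventually exceed the finite value $v(0:_M I)$. Part (2) is likewise Theorem~\ref{th:main1} applied over $\mca^M_N(I)=\mcb_N(I)$, followed by taking the minimum of finitely many linear functions.
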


\begin{proof}
    (1) The first part is shown in Proposition~\ref{prop:vbound}. For the second part, assume that $d_i \ge 1$ for $ 1 \le i \le c$, and $\mca^M_N(I) \subseteq V(I)$. Then, by Theorem~\ref{th:main1}, the function $v_{\fp}(M/I^n N)$ is strictly increasing for all $n\gg 0$, where $\fp \in \mca^M_N(I) \smallsetminus \Ass_R(0 :_M I)$. On the other hand, when $\fp \in \Ass_R(0 :_M I)$, we have $v_{\fp}(M/I^n N) = v_{\fp}(0 :_M I)$ for all $n\gg 0$, cf.~Remark~\ref{rmk:eventually-cons}. Note that $\emptyset \neq \Ass_R(0 :_M I) \subseteq \mca^M_N(I)$ (by Theorem~\ref{th:ass}). Thus, for all $n \gg 0$, it follows that
    \begin{align*}
    v(M/I^n N) 
    & = \min \{ v_{\fp}(M/I^n N) : \fp \in \Ass_R(0 :_M I) \} \\
    & = \min \{ v_{\fp}(0 :_M I) : \fp \in \Ass_R(0 :_M I) \} 
    = v(0 :_M I).
    \end{align*}

    (2) Since $(0 :_M I) = 0$ and $\mca^M_N(I) \subseteq V(I)$, by Theorem~\ref{th:ass}, we obtain $\mca^M_N(I) = \mcb_N(I)$. Hence Theorem~\ref{th:main1} yields that
    \begin{align*}
    v(M/I^n N) 
    & = \min\{v_{\fp}(M/I^n N) : \fp \in \mca^M_N(I) \} \\
    & = \min \{ v_{\fp}(I^{n-1} N/I^n N) : \fp \in \mcb_N(I)\} \\
    & = v(I^{n-1}N/I^n N) \; \mbox{ for all } n \gg 0.
    \end{align*} 
    Now, it is known that for every $\fp \in \mcb_N(I)$, the function $v_{\fp}(I^{n-1} N/I^n N)$ is asymptotically linear in $n$ with the leading coefficient in $\{ d_1, \dots, d_c \}$, see \cite[Thm.~2.11]{FG25a}. As the minimum of finitely many linear functions in $n$ is also asymptotically linear in $n$ with leading coefficient equal to the minimum of those leading coefficients, the desired result follows.
\end{proof}

Now we are in a position to present the main results of this section.

\begin{theorem}\label{th:main}
    Assume \textnormal{Setup~\ref{setup}} and \textnormal{Notation~\ref{notation}}. Then the following hold:
    \begin{enumerate}[\rm(1)]
    \item 
    Let $\fp \in \mca^M_N(I) \smallsetminus \Ass_R(0 :_M I)$ and $I \subseteq \fp$. Then, $ \fp \in \mcb_N(I)$. Moreover, there exist $a_{\fp} \in \{d_1, \dots, d_c \}$ and $b_{\fp} \in \bz$ such that 
    $$v_{\fp} \big(M/I^n N\big) = v_{\fp} \big(I^{n-1} N/I^n N \big) = a_{\fp}n + b_{\fp} \; \text{ for all } n \gg 0.$$
    \item 
    Let $\fp \in \mca^M_N(I) \cap \Ass_R(0 :_M I)$. If $d_i \ge 1$ for $ 1 \le i \le c$, then 
    $$v_{\fp} \big(M/I^n N \big) = v_{\fp}(M) = v_{\fp}( 0:_M I) \; \text{ for all } n \gg 0.$$
    \item 
    Let $(0 :_M I) \neq 0$. Then, $v \big(M/I^n N \big) \le  v( 0 :_M I)$ for all $n \gg 0$. Furthermore, if $d_i \ge 1$ for $ 1 \le i \le c$, and if $\mca^M_N(I) \subseteq V(I)$, then
    \[
    v \big(M/I^n N \big) =  v( 0 :_M I) \; \mbox{ for all } n \gg 0.
    \]
    \item 
    Let $(0 :_M I) = 0$, and  $ \emptyset \neq \mca^M_N(I) \subseteq V(I)$. Then,
    $$v \big(M/I^n N \big) = v \big(I^{n-1} N/I^n N \big) \; \mbox{ for all } n \gg 0,$$
    which is linear in $n$ with the leading coefficient in $ \{d_1, \dots, d_c \}$.
\end{enumerate}
\end{theorem}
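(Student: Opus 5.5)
Theorem~\ref{th:main} collects results already established, so the plan is to assemble them item by item, with a case split only in part (2).

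For part (1), the hypotheses $\fp \in \mca^M_N(I)\smallsetminus \Ass_R(0:_M I)$ and $I\subseteq\fp$ are exactly those of Theorem~\ref{th:main1}. That theorem gives $\fp\in\mcb_N(I)$ together with the eventual linear formula $v_{\fp}(M/I^nN)=v_{\fp}(I^{n-1}N/I^nN)=a_{\fp}n+b_{\fp}$ with $a_{\fp}\in\{d_1,\dots,d_c\}$. So part (1) is simply a restatement.

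For part (2), let $\fp\in\mca^M_N(I)\cap\Ass_R(0:_M I)$. Every associated prime of $(0:_M I)$ contains $I$, so $I\subseteq\fp$. I split according to whether $\fp$ lies in $\mcb_N(I)$.
\begin{itemize}
\item If $\fp\notin\mcb_N(I)$, Theorem~\ref{th:main2i} applies; it needs no positivity of the degrees. It gives $v_{\fp}(M/I^nN)=v_{\fp}(0:_M I)=v_{\fp}(M)$ for $n\gg0$.
\item If $\fp\in\mcb_N(I)$, then $\fp\in\mcb_N(I)\cap\Ass_R(0:_MI)$. With $d_i\ge1$, Theorem~\ref{th:main2ii} gives the same equalities.
\end{itemize}
Since $\mcb_N(I)$ is a fixed stable set, the two cases are exhaustive, and only finitely many thresholds are involved.

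Parts (3) and (4) are verbatim Theorem~\ref{th:main3and4}(1) and (2), respectively. For completeness I would recall how those arguments go.
\begin{itemize}
\item The inequality $v(M/I^nN)\le v(0:_MI)$ comes from Proposition~\ref{prop:vbound}.
\item Equality in (3) holds because, under $\mca^M_N(I)\subseteq V(I)$, each $\fp\in\mca^M_N(I)$ not in $\Ass_R(0:_MI)$ has $v_{\fp}$ growing linearly with positive slope by part (1). Meanwhile, the finitely many $\fp\in\Ass_R(0:_MI)\neq\emptyset$ have eventually constant $v_{\fp}$ equal to $v_{\fp}(0:_MI)$ by part (2). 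So the minimum is eventually $v(0:_MI)$.
\item Part (4) uses Theorem~\ref{th:ass}, which gives $\mca^M_N(I)=\mcb_N(I)$. Part (1) then applies to every prime, and the minimum of finitely many eventually linear functions is eventually linear with leading coefficient among the $d_i$.
\end{itemize}

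No step is a real obstacle, since the substantive work lies in Theorems~\ref{th:main1}, \ref{th:main2i}, \ref{th:main2ii} and \ref{th:main3and4}. The only point needing care is checking in part (2) that $I\subseteq\fp$ holds automatically and that the case split covers every prime.
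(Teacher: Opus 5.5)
Your proposal is correct and follows the paper's proof exactly: part (1) is Theorem~\ref{th:main1}, part (2) uses the same case split on whether $\fp\in\mcb_N(I)$ via Theorems~\ref{th:main2i} and~\ref{th:main2ii}, and parts (3) and (4) are Theorem~\ref{th:main3and4}. One small point in your sketch of (3): applying part (2) to the primes of $\Ass_R(0:_M I)$ requires $\Ass_R(0:_M I)\subseteq\mca^M_N(I)$, which Theorem~\ref{th:ass} supplies.
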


\begin{proof}
    (1) is shown in Theorem~\ref{th:main1}. To prove (2), let $\fp \in \mca^M_N(I) \cap \Ass_R(0 :_M I)$. Then $I \subseteq \fp$. Moreover, there are two possibilities: either $\fp \notin \mcb_N(I)$ or $\fp \in \mcb_N(I)$. In the first case, the required result is shown in Theorem~\ref{th:main2i}, while the second case follows from Theorem~\ref{th:main2ii}. Finally, (3) and (4) are exactly Theorem~\ref{th:main3and4}.(1) and (2), respectively.
\end{proof}


\begin{remark}\label{rmk:depend-on-M-N}
    In Theorem~\ref{th:main}, we observe the following.
    \begin{enumerate}[\rm(1)]
    \item Part (2) (resp., (3)) shows that when $( 0 :_M I) \neq 0$, we eventually have $v_{\fp}(M/I^n N) < v_{\fp}(I^{n-1}N/I^n N)$ (resp., $v(M/I^n N) < v(I^{n-1} N/I^n N))$. Moreover, the functions $v_{\fp}(M/I^n N)$ and $v(M/I^n N)$ eventually become independent of $N$.
    \item
    Parts (1) and (4) improve the results of \cite[Thm.~2.14.(1) and (2)]{FG25a} by showing that, asymptotically, the linear functions $v_{\fp}(M/I^nN)$ and $v(M/I^nN)$ depend only on $I$ and $N$, and are independent of $M$.
    \end{enumerate}
\end{remark}

Next, we make the following observations regarding the behaviour of $v_{\fp}(M/I^nN)$ when $\fp \in \mca^M_N(I)\smallsetminus V(I)$.

\begin{proposition}\label{prop:vnumber}
    With \textnormal{Setup~\ref{setup}} and \textnormal{Notation~\ref{notation}}, let $\fp \in \mca^M_N(I)\smallsetminus V(I)$. Then $\fp \in \Ass_R(M/I^nN)$ for all $n\ge 0$. Moreover, the following hold:
    \begin{enumerate}[\rm (1)]
    \item 
    $v_{\fp}(M/I^n N) \le v_{\fp}(M/I^{n+1} N)$ for all $n \ge 0$.
    \item 
    $v_{\fp}(M/I^n N) \le \indeg \big((I + \fp)/ \fp \big) \cdot n + v_{\fp}(M/N)$ for all $n \ge 1$.
    \end{enumerate}
\end{proposition}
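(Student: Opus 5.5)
The first assertion is immediate from Proposition~\ref{prop:ass}: since $\fp \notin V(I)$ and $\fp \in \mca^M_N(I)$, we have $\fp \in \Ass_R(M/I^nN)$ for $n\gg 0$. Hence $\fp \in \Ass_R(M/N)\smallsetminus V(I)$, and therefore $\fp \in \Ass_R(M/I^nN)$ for every $n\ge 0$. The two inequalities will both come from the construction in the proof of Proposition~\ref{prop:ass}. That construction shows that if $\fp = (N:_R x)$ and $r\in I\smallsetminus\fp$, then $\fp = (I^nN:_R r^nx)$. We refine it so that it transports annihilator elements between consecutive powers, keeping track of degrees.

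For (1), fix $n\ge 0$. Let $x\in M$ be homogeneous with $\fp = (I^{n+1}N:_R x)$ and $\deg x = v_{\fp}(M/I^{n+1}N)$; such an $x$ exists because the minimum defining $v_\fp$ is attained by homogeneous elements. I claim that $\fp = (I^nN:_R x)$. First, $(I^{n+1}N:_R x)\subseteq (I^nN:_R x)$, so $\fp \subseteq (I^nN:_Rx)$. For the reverse inclusion, let $a\in (I^nN:_Rx)$ and pick $r\in I\smallsetminus \fp$. Then $rax\in I^{n+1}N$, so $ra\in\fp$, and hence $a\in \fp$ because $\fp$ is prime. This gives $v_\fp(M/I^nN)\le \deg x = v_{\fp}(M/I^{n+1}N)$.

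For (2), let $x$ be homogeneous with $\fp=(N:_Rx)$ and $\deg x = v_\fp(M/N)$. Set $t := \indeg((I+\fp)/\fp)$. Since $I\not\subseteq\fp$, the module $(I+\fp)/\fp$ is nonzero, so $t<\infty$. Note that $((I+\fp)/\fp)_t \cong I_t/(I_t\cap \fp)$, so we may choose a homogeneous $r\in I_t\smallsetminus\fp$. The computation in Proposition~\ref{prop:ass} then gives $\fp = (I^nN:_R r^nx)$ for every $n\ge 1$. The element $r^nx$ is homogeneous of degree $tn+\deg x$, so $v_\fp(M/I^nN)\le tn + v_\fp(M/N)$.

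The only point needing care is the choice of a homogeneous $r\in I\smallsetminus\fp$ of the right degree. This works because $\fp$ is homogeneous (associated primes of graded modules are graded) and $I$ is homogeneous. As a result, $I_t\cap\fp$ is a proper subset of $I_t$ exactly in the degree $t=\indeg((I+\fp)/\fp)$. Nothing else is an obstacle.
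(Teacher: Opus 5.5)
Your proof is correct and follows the paper's argument. The first assertion and part (2) match the paper: Proposition~\ref{prop:ass} gives the first, and part (2) uses a homogeneous $r\in I\smallsetminus\fp$ of least degree acting on a minimal-degree $x$ with $(N:_Rx)=\fp$. For part (1), your direct check that $(I^nN:_R x)=\fp$ is just the elementwise form of the paper's appeal to Proposition~\ref{prop:sesvnumber}.(2) for the sequence $0\to I^nN/I^{n+1}N\to M/I^{n+1}N\to M/I^nN\to 0$.
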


\begin{proof}
    In view of Proposition~\ref{prop:ass},
    \[
    \Ass_R(M/I^nN) \smallsetminus V(I) = \Ass_R(M/N) \smallsetminus V(I) \; \mbox{ for all } n \ge 0.
    \]
    Hence $\mca^M_N(I)\smallsetminus V(I) = \Ass_R(M/N) \smallsetminus V(I)$, and $\fp \in \Ass_R(M/I^nN)$ for all $n\ge 0$.
    
    (1) Let $n \ge 0$. Consider the short exact sequence
    \begin{equation*}
    0 \to \frac{I^n N}{I^{n+1}N} \to \frac{M}{I^{n+1}N} \to \frac{M}{I^{n}N} \to 0.
    \end{equation*}
    Since $I \nsubseteq \fp$, it follows that $\fp \notin \Ass_R \big(I^nN/I^{n+1}N \big)$. Consequently, the desired inequality follows from Proposition~\ref{prop:sesvnumber}.(2).
    
    (2) Since $\fp \in \Ass_R(M/N)$, there exists a homogeneous element $x \in M$ such that $(N :_R x) = \fp$ and $v_{\fp}(M/N) = \deg(x) $. Pick a homogeneous element $r \in I \smallsetminus \fp$ of the least possible degree. Then, for each $n \ge 1$, we have $r^nx\neq 0$ and $(I^n N :_R r^nx) = \fp$. So
    $$v_{\fp}(M/I^n N) \le \deg(r^nx) \le \deg(r)\cdot n + \deg(x) \; \mbox{ for all } n\ge 1.$$
    Hence the result follows, since $\deg(r) = \indeg((I + \fp)/ \fp)$ and $\deg(x)=v_{\fp}(M/N)$.
\end{proof}

\begin{remark}\label{rmk:gen-ineq}
\begin{enumerate}[\rm (1)]
    \item 
    Proposition~\ref{prop:vnumber} ensures that for $\fp \in \mca^M_N(I) \smallsetminus V(I)$, the function $v_{\fp}(M/I^n N) $ is always non-decreasing, and is bounded above by the linear function $\indeg((I + \fp)/ \fp) \cdot n + v_{\fp}(M/N)$. Thus, if $v_{\fp}(M/I^n N) $ is eventually linear, then its slope is at most $\indeg((I + \fp)/ \fp)$. This upper bound is sharp; see Example~\ref{ex:3} and Remark~\ref{rmk:slopebound}.(2).
    \item 
    The asymptotic behaviour of $v_{\fp}(M/I^nN)$ as a function of $n$ is not uniform across all $\fp \in \mca^M_N(I)\smallsetminus V(I)$, as illustrated in Example~\ref{ex:3}.
\end{enumerate}
\end{remark}

\section{Examples}\label{sec:examples}

In this section, we present several concrete examples to complement our results. The first example demonstrates that if the condition that $I$ is generated by homogeneous elements of positive degree is removed from Theorem~\ref{th:main}.(3), then the equality $v(M/I^nM) = v(0 :_M I)$ for $n \gg 0$ need not hold whenever $(0 :_M I) \neq 0$.

\begin{example}\label{ex:1}
    Let $R := \bz[X]$ be a standard graded polynomial ring in the variable $X$. Consider $M := R/(pX)$, $I = \fp := (p)$ and $\fm := (p,X)$, where $p$ is a prime number. Then $(0 :_M I) = (X)M \neq 0$. Moreover, the following hold.
    \begin{enumerate}[\rm (1)]
    \item 
    $\Ass_R(M/IM) = \Ass_R(0 :_M I) = \{ \fp \}$. For all $n \ge 2$, we have
    \[
    \Ass_R \big(M/I^nM\big) = \{ \fp, \fm \} \; \mbox{ and } \; \Ass_R \big(I^{n-1}M/I^nM \big) = \{ \fm\}.
    \]
    Thus, the sets $\Ass_R(0 :_M I)$ and $\mcb_M(I)$ are disjoint.
    \item 
    $v_{\fp}(M/IM) = 0$, and $v_{\fp} \big(M/I^nM \big) = v_{\fp}(M) = v_{\fp}(0 :_M I) = 1$ for all $n \ge 2$.
    \item 
    $v_{\fm}\big(M/I^nM \big) = v_{\fm} \big(I^{n-1}M/I^nM \big) = 0$ for all $n \ge 2$.
    \item 
    $v \big(M/I^nM \big) = v \big(I^{n-1}M/I^nM \big) = 0 < v(0 :_M I) = 1$ for all $n \ge 1$.
    \end{enumerate}
\end{example}

\begin{proof}
    Since $((pX) :_R p) = (X)$, we have $(0 :_M I) = (X)M \cong (R/\fp)(-1) \neq 0$.
    
    (1) Note that $M/IM \cong R/\fp$. Therefore $\Ass_R(M/IM) = \{ \fp \} = \Ass_R(0 :_M I)$. Let $n \ge 1$. Then $I^n M = (p^n, pX)/ (pX)$. Thus, 
    $$M/I^{n+1} M \cong R/(p^{n+1} ,pX) \; \mbox{ and } \; I^{n}M/I^{n+1} M \cong (p^{n} ,pX)/(p^{n+1} ,pX) .$$
    Now $(p^{n+1} , pX) = (p) \cap (p^{n+1} ,X) $, which is a primary decomposition of the ideal $(p^{n+1},pX)$ in $R$. Hence $\Ass_R \big(M/I^{n+1}M\big) = \{ \fp, \fm \}$. Since the maximal ideal $\fm$ annihilates the module $I^nM/I^{n+1} M$, it follows that $\Ass_R \big(I^nM/ I^{n+1}M \big) = \{ \fm \}$.
    
    (2) Since $M/IM \cong R/\fp$ and $(0 :_M I) \cong (R/\fp)(-1)$, one obtains $v_{\fp}(M/IM) = 0$ and $v_{\fp}(0 :_M I)=1$. Note that $\fp = ((pX) :_R X )$ and $\fp = ((p^n,pX) :_R X )$ for all $n \ge 2$, and these equalities do not hold if $X$ is replaced by any other homogeneous element of $R$ of lower degree. So
    $$v_{\fp} \big(M/I^nM \big) = 1 = v_{\fp}(M) = v_{\fp}(0 :_M I) \; \mbox{ for all } n \ge 2.$$
    
    (3) Let $n \ge 2$. Observe that $\fm = \big((p^n,pX) :_R p^{n-1} \big)$, and $I^{n-1} M/I^nM$ is generated by the image of $p^{n-1}$. So $v_{\fm}(I^{n-1} M/I^nM) = 0$. Hence,
    since $v_{\fm}(M/I^nM) \le v_{\fm}(I^{n-1} M/I^nM)$, it follows that $v_{\fm}(M/I^nM) = 0$.
    
    (4) It follows from (1), (2) and (3).
\end{proof}

In the following example, we show that the union $ \Ass_R(0:_M I) \cup \mcb_N(I)$ in Theorem~\ref{th:ass} need not be disjoint. 

\begin{example}\label{ex:2}
    Let $R := K[X,Y,Z]$ be a standard graded polynomial ring in three variables over a field $K$. Consider $I: = (XY,Z)$, $\fm : =(X,Y,Z)$, and $M := R/(X^3,Y,XZ)$. Then $(0 :_M I) = (X)M \neq 0$. Moreover, the following hold.  
    \begin{enumerate}[\rm (1)]
    \item 
    $\Ass_R \big(M/I^{n}M \big) = \Ass_R \big(I^{n-1}M/I^{n}M \big) = \Ass_R(0 :_M I) = \{ \fm \}$ for all $n \ge 1$. Thus, the sets $\Ass_R(0 :_M I)$ and $\mcb_M(I)$ are not disjoint.
    
    \item $v(0 :_M I) = v_{\fm}(0 :_M I) = 2$, and 
    \begin{align*} 
    v\big(M/I^{n} M \big) =  v_{\fm} \big(M/I^{n} M \big) = 
    \left \{  
    \begin{array}{ll} 
    2 & \text{if $n=1$ or $n \ge3$} \\ 
    1 & \text{if $n=2$}. 
    \end{array}
    \right.
    \end{align*}
    
    \item  
    $ v\big(I^{n-1}M/I^{n} M \big) =  v_{\fm} \big(I^{n-1}M/I^{n} M \big) = 
    \left \{  
    \begin{array}{ll} 
    2 & \text{if $n=1$} \\ 
    n-1 & \text{if $n\ge2$}. 
    \end{array}
    \right.
    $
    \end{enumerate}
\end{example}

\begin{proof}
    Since $ \big( (X^3,Y,XZ):_R (XY,Z) \big) = (X)$, we have 
    $$ (0 :_M I) = (X)M = (X,Y)/(X^3,Y,XZ) \neq (0).$$ 
    
    (1) Note that $\fm$ annihilates the module $(0 :_M I)$. So $\Ass_R(0 :_M I) =\{ \fm \}$. Now $I^{n} M = \big(I^{n} +(X^3,Y,XZ) \big) /(X^3,Y,XZ)$, and the monomial ideal $I^{n} + (X^3,Y,XZ)$ is generated by the elements $X^3,Y,XZ,Z^{n}$, where $n \ge 1$. Therefore, $M/IM \cong R/(X^3,Y,Z)$. For $n\ge 2$, we have
    $$\frac{M}{I^{n}M} \cong \frac{R}{(X^3,Y,XZ,Z^{n})} \; \text{ and } \; \frac{I^{n-1}M}{I^{n}M} \cong \frac{(X^3,Y,XZ,Z^{n-1})}{(X^3,Y,XZ,Z^{n})}.$$
    For $n\ge 1$, since $M/I^{n}M$ (and hence $I^{n-1}M/I^{n}M$) is annihilated by some power of $\fm$, it follows that $\Ass_R(M/I^{n}M) = \Ass_R(I^{n-1}M/I^{n}M) = \{\fm \}$.
    
    (2) Note that $\big((X^3, Y, XZ) :_R X^2 \big)= \fm$ and $\big((X^3, Y, Z) :_R X^2\big)= \fm$, and $X^2$ is a non-zero homogeneous element of least degree in $R$ with these properties. So
    \[
    v(0 :_M I) = v_{\fm}( 0:_M I) = 2 = v_{\fm}(M/IM) = v(M/IM).
    \]
    It can be verified that
    \[
    \big( (X^3, Y, XZ, Z^2) :_R Z \big) = \fm = \big((X^3,Y,XZ,Z^{n} ) :_R X^2 \big) \mbox{ for }n \ge 3,
    \]
    where the residue class of $Z$ in $M/I^2M$, and that of $X^2$ in $M/I^{n}M$ (for $n \ge 3$) are non-zero homogeneous elements of the least possible degree with these properties. Consequently, the statements in (2) follow.
    
    (3) The case $n=1$ follows from (2). So we fix $n \ge 2$. The module $I^{n-1} M/ I^{n}M$ is a non-zero cyclic $R$-module generated by the residue class of $Z^{n-1}$, and $\fm = \big((X^3,Y,XZ,Z^{n}) :_R Z^{n-1} \big)$. Therefore, the desired result follows.
\end{proof}

The example below shows that when $\fp \in \mca^M_N(I) \smallsetminus V(I)$, asymptotically, the function $v_{\fp}(M/I^nN)$ need not be a polynomial in $n$ of degree one.

\begin{example}\label{ex:3}
    Let $R = K[X,Y,Z]$ be a standard graded polynomial ring over a field $K$. Consider $I := (X,Y^2,Z^3)$, $\fp := (X)$, $\fq := (X,Y)$, $\fm := (X,Y,Z)$, $M := R/(X^3,XY^4)$, and $N := (X^3,XY)/(X^3,XY^4)$. Then $(0 :_M I) = 0$. Moreover, the following hold.
    \begin{enumerate}[\rm (1)]
    \item $\Ass_R(M/N) = \{ \fp, \fq \}$, and $\Ass_R(M/I^n N) = \{\fp, \fq ,\fm \}$ for all $n \ge 3.$ 
    \item 
    $v_{\fp}(M/I^n N) = 4$, $v_{\fq}(M/I^n N) = 3n-1$ and $v_{\fm}(M/I^n N) = 3n-2$ for all $n\ge 3$.
    \item 
    $v (M/I^n N) = 4$ for all $n \ge 3$.
    \end{enumerate}
\end{example}

\begin{proof}
    Since $Z^3 \in I$ is $M$-regular, we have $(0 :_M I) = 0$.
    
    (1) Note that $M/ N \cong R/(X^3, XY)$. Since $(X^3,XY) = (X) \cap (X^3,Y)$, it follows that $\Ass_R(M/N) = \{ \fp, \fq \}$. Denote 
    $$J_n := \big((X^3, XY^4) + I^n(X^3, XY) \big) \; \text{for all } n\ge 1.$$ 
    Then  $I^n N = J_n/(X^3, XY^4)$, and $M/ I^n N \cong R/ J_n$. Let $n \ge 3$. It can be seen that
    $$J_n = \big(X^3, XY^4, X^2Y^3Z^{3(n-2)}, X^2YZ^{3(n-1)},XY^3Z^{3(n-1)} , XYZ^{3n}\big).$$ 
    Then, splitting the last generator of $J_n$, we obtain
    \begin{align*}
    J_n 
    & = (X) \cap (X^3, Y) \cap (X^3, XY^4, X^2Y^3Z^{3(n-2)}, X^2YZ^{3(n-1)},XY^3Z^{3(n-1)} , Z^{3n}) \\
    & = (X) \cap (X^3,Y) \cap \big(X^3,Y^4, X^2Y^3Z^{3(n-2)}, X^2Z^{3(n-1)}, Y^3Z^{3(n-1)}, Z^{3n}\big).
    \end{align*}
    The second equality is obtained after splitting $XY^4$, $X^2YZ^{3(n-1)}$ and $XY^3Z^{3(n-1)}$ one by one.
    Thus we get a primary decomposition of $J_n$. Consequently, $$\Ass_R(M/I^nN) = \{ \fp, \fq, \fm \}.$$
    
    (2) Fix $n \ge 3$. Let us denote $x,y,z$ the residue classes of $X,Y,Z$ modulo $(X^3,XY^4)$, respectively. For an ideal $\fa$ of $R$, note that
    \[
     (I^n N :_M \fa) = (J_n :_R \fa)/(X^3,XY^4).
    \]
    This helps us to compute the following:
    \begin{equation}\label{eq:colonvnumber}
    \begin{aligned}
    (I^n N :_M \fp)  = & \big(x^2, y^4, xy^3z^{3(n-2)},xyz^{3(n-1)},
    y^3z^{3(n-1)},yz^{3n} \big), \\
    (I^n N :_M \fq) = & \big(x^3,xy^4,x^2y^3, xy^3z^{3(n-2)}, x^2y^2z^{3(n-2)}, x^2z^{3(n-1)}, \\
    & xy^2z^{3(n-1)}, xyz^{3n} \big) \\
    (I^n N :_M \fm) = & \big(x^3,xy^4, x^2y^3z^{3n-7}, x^2yz^{3(n-1)}, xy^3z^{(3n-4)}, x^2y^2z^{3n-4}, \\
    & xyz^{3n}, xy^2z^{3n-1} \big).
    \end{aligned}
    \end{equation}
    Observe that if $\fa$ is an ideal of $R$ and $u \in M$ satisfying $\fa = (I^n N :_R u)$, then necessarily $u \in (I^n N:_M \fa)$. Consequently, by \eqref{eq:colonvnumber}, one obtains
    $\fp = \big(I^n N :_R y^4 \big)$, $\fq = \big(I^n N :_R x^2z^{3(n-1)} \big)$, and $\fm = \big(I^n N :_R x^2y^3z^{3n-7} \big)$. Moreover, these equalities will not hold if one replaces the elements $y^4, x^2z^{3(n-1)}$ and $x^2y^3z^{3n-7}$ by any other smaller degree homogeneous elements from the submodules in equation \eqref{eq:colonvnumber}, respectively. This proves our assertion in (2). 
    
    (3) It is clear from (2).
\end{proof}

\begin{remark}\label{rmk:slopebound}
\begin{enumerate}[\rm(1)]
\item 
Example~\ref{ex:3} shows that when $N \neq M$, the function $v(M/I^nN)$ might be asymptotically constant even though $(0 :_M I) = 0$. This situation is unlikely to occur when $N=M$, see Theorem~\ref{th:intro2}.(4). 
\item 
In Example~\ref{ex:3}, $\mca^M_N(I) \smallsetminus V(I) = \{ \fp, \fq \}.$ The function $v_{\fp}(M/I^nN)$ is asymptotically constant, whereas the function $v_{\fq}(M/I^nN)$ is asymptotically linear with slope $3$. Here $(I + \fq)/\fq = (X,Y,Z^3)/(X,Y)$. So $\indeg((I + \fq)/\fq) = 3$.
\end{enumerate}
\end{remark}

Based on experimental evidence, we expect that for $ \fp \in \mca^M_N(I) \smallsetminus V(I)$, the function $v_{\fp}(M/I^n N)$ is asymptotically constant or a polynomial in $n$ of degree one. Thus, we end this article by posing the following natural question.

\begin{question}\label{remaining-ques}
For each $\fp \in \mca^M_N(I) \smallsetminus V(I)$, is it true that the function $v_{\fp}(M/I^n N)$ is eventually either constant or a polynomial in $n$ of degree one?
\end{question}

\bibliographystyle{plain}
\bibliography{main}

\end{document}